\newlength\enumsep
\newcounter{enumlist}
\newtheorem{theorem}{Theorem}
\newtheorem{condition}[theorem]{Condition}
\newtheorem{corollary}[theorem]{Corollary}
\newtheorem{definition}[theorem]{Definition}
\newtheorem{lemma}[theorem]{Lemma}
\newtheorem{proposition}[theorem]{Proposition}
\newcounter{spslist}
\newcounter{geqncount}
    {\refstepcounter{equation}%
     \setcounter{geqncount}{\value{equation}}%
     \setcounter{equation}{0}%
  }%
    {\setcounter{equation}{\value{geqncount}}}
\newcommand{\pp}{\text{p}}
\newcommand{\RR}{\mathbb{R}}
\newcommand{\CC}{\mathbb{C}}
\newcommand{\half}{\frac{1}{2}}
\newcommand{\Hhalf}{H^{1/2}} 
\newcommand{\Hmhalf}{H^{-1/2}} 
\newcommand{\Hmhalfz}{H^{-1/2}_0} 
\newcommand{\Hmhalfo}{H^{-1/2,o}} 
\newcommand{\Hmhalfe}{H^{-1/2,e}} 
\newcommand{\Gz}{\Gamma_{\!0}}
\newcommand{\KsGz}{\mathcal K_{\Gamma_{\!0}}^*}
\newcommand{\KsG}{\mathcal K_{\Gamma}^*}
\newcommand{\KG}{\mathcal K_{\Gamma}}
\newcommand{\KsGzo}{\mathcal K_{\Gamma_{\!0},o}^*}
\newcommand{\KsGo}{\mathcal K_{\Gamma,o}^*}
\newcommand{\KGo}{\mathcal K_{\Gamma,o}}
\newcommand{\KsGze}{\mathcal K_{\Gamma_{\!0},e}^*}
\newcommand{\KsGe}{\mathcal K_{\Gamma,e}^*}
\newcommand{\KGe}{\mathcal K_{\Gamma,e}}
\newcommand{\ksGz}{K_{\Gamma_{\!0}}^*}
\newcommand{\ksG}{K_{\Gamma}^*}
\newcommand{\KsS}{\mathcal K_{\Sigma}^*}
\newcommand{\KS}{\mathcal K_{\Sigma}}
\newcommand{\KsSo}{\mathcal K_{\Sigma,o}^*}
\newcommand{\KSo}{\mathcal K_{\Sigma,o}}
\newcommand{\KsSe}{\mathcal K_{\Sigma,e}^*}
\newcommand{\KSe}{\mathcal K_{\Sigma,e}}
\newcommand{\ksS}{K_{\Sigma}^*}
\newcommand{\KsO}{\mathcal K_{\partial\Omega}^*}
\newcommand{\KsOo}{\mathcal K_{\partial\Omega,o}^*}
\newcommand{\KOo}{\mathcal K_{\partial\Omega,o}}
\newcommand{\KsOe}{\mathcal K_{\partial\Omega,e}^*}
\newcommand{\KOe}{\mathcal K_{\partial\Omega,e}}
\newcommand{\SG}{\mathcal{S}_\Gamma}
\newcommand{\SGz}{\mathcal{S}_{\Gamma_{\!0}}}
\newcommand{\CS}{C_{\hspace{-1pt}\mathcal{S}}}
\newcommand{\Cphi}{C_{\hspace{-1pt}\phi}}
\newcommand{\disk}{\Delta}
\newcommand{\type}{$T$ }
\newcommand{\ess}{\text{\hspace{-1pt}ess}}
\newcommand{\acont}{\text{\hspace{-1pt}ac}}
\newcommand{\scont}{\text{\hspace{-1pt}sc}}
\renewcommand{\pp}{\text{\hspace{-1pt}pp}}
\newcommand{\meas}{\mathrm{len}}
\newcommand{\dist}{\mathrm{dist}}
\newcommand{\supp}{\mathrm{supp}}
\begin{document}

\bibliographystyle{plain} 

\begin{center}
{\bf \Large Embedded eigenvalues for the Neumann-Poincar\'e operator}
\end{center}

\vspace{0.2ex}

\begin{center}
{\scshape \large Wei Li  \,\,\,and\,\,\, Stephen P\!. Shipman} \\
\vspace{1ex}
{\itshape Department of Mathematics}\\
{\itshape Louisiana State University, Baton Rouge, LA, USA}
\end{center}

\vspace{3ex}
\centerline{\parbox{0.9\textwidth}{
{\bf Abstract.}\
The Neumann-Poincar\'e operator is a boundary-integral operator associated with harmonic layer potentials.  This article proves the existence of eigenvalues within the essential spectrum for the Neumann-Poincar\'e operator for certain Lipschitz curves in the plane with reflectional symmetry, when considered in the functional space in which it is self-adjoint.
The proof combines the compactness of the Neumann-Poincar\'e operator for curves of class $C^{2,\alpha}$ with the essential spectrum generated by a corner.  Eigenvalues corresponding to even (odd) eigenfunctions are proved to lie within the essential spectrum of the odd (even) component of the operator when a $C^{2,\alpha}$ curve is perturbed by inserting a small corner.
}}

\vspace{3ex}
\noindent
\begin{mbox}
{\bf Key words:}  Neumann-Poincar\'e operator; embedded eigenvalue; Lipschitz curve; integral operator; spectrum; potential theory 
\end{mbox}
\vspace{3ex}

\hrule
\vspace{1.1ex}

\section{Introduction and basics of the Neumann-Poincar\'e operator}\label{sec:introduction}

The Neumann-Poincar\'e operator $\KG$ and its formal adjoint $\KsG$ are boundary-integral operators associated with the double-layer harmonic potential and the normal derivative of the single-layer harmonic potential for the boundary $\Gamma$ of a bounded domain in $\RR^n$.  When $\Gamma$ is of class $C^2$, these operators are compact, and thus their spectra consist only of eigenvalues converging to zero (and zero itself).  For domains with Lipschitz boundary, they have essential spectrum, which depends critically on the function spaces in which they act.  This work proves the existence of eigenvalues within the essential spectrum of $\KsG$ for certain Lipschitz curves $\Gamma$ in $\RR^2$ in the Sobolev distribution space $\Hmhalf(\Gamma)$, in which $\KsG$ is self-adjoint (Theorem~\ref{thm:main}).  The theorem implies eigenvalues within the essential spectrum for $\KG$ in $\Hhalf(\Gamma)$, which has exactly the same spectrum as $\KsG$ in $\Hmhalf(\Gamma)$.

In $\RR^2$, if $\Gamma$ is the boundary of a simply connected bounded domain, the Neumann-Poincar\'e operator applied to a function $\phi:\Gamma\to\CC$  is 
\begin{equation}
  \KG[\phi](x) \;=\; - \frac{1}{2\pi} \int_\Gamma \phi(y) \frac{x-y}{|x-y|^2}\cdot n_y\, ds(y),
\end{equation}
in which $x$ and $y$ are on $\Gamma$, $n_y$ is the outward-directed normal vector to $\Gamma$ at $y\in\Gamma$, and $ds(y)$ is the arclength measure on $\Gamma$.  The adjoint of $\KG$ in $L^2(\Gamma)$, which we called the formal adjoint $\KsG$ above, is
\begin{equation}
  \KsG[\phi](x) \;=\; \frac{1}{2\pi} \int_\Gamma \phi(y) \frac{x-y}{|x-y|^2}\cdot n_x\, ds(y).
\end{equation}
These operators are defined as legitimate integrals when $\Gamma$ and $\phi$ are smooth enough, and they are extended to different normed distributional spaces by continuity.

The eigenvalues of $\KsG$ in $L^2(\Gamma)$ are real.  This is because $\KsG$ is symmetric with respect to the inner product associated with a weaker norm defined through the boundary-integral operator $\SG$ for the single-layer potential,
\begin{equation}\label{eq:single}
  \SG[\phi] (x) \,=\, - \frac{1}{2\pi}\int_{\Gamma} \log (\beta |x-y|)\, \phi(y)\, ds(y).
\end{equation}
For appropriately chosen $\beta>0$,
this operator on $L^2(\Gamma)$ is strictly positive~\cite[Lemma~2.1]{KhavinsonPutinarShapiro2007} and not surjective since it is bounded and invertible from $\Hmhalf(\Gamma)$ to $\Hhalf(\Gamma)$~\cite{CostabelStephan1985,PerfektPutinar2014}.  The Plemelj symmetrization principle 
\begin{equation}
  \KG\SG = \SG\KsG
\end{equation}
in $L^2(\Gamma)$ implies the symmetry of $\KsG$ with respect to the inner product $\langle f, g \rangle_{\SG} := (\SG f,g)_{L^2(\Gamma)}$,
\begin{equation}\label{eq:sinner}
  \langle \KsG f,g \rangle_{\SG}
  = \langle f, \KsG g \rangle_{\SG}\,.
\end{equation}
Perfekt and Putinar~\cite{PerfektPutinar2014} show that this theory persists even for Lipschitz curves~$\Gamma$.
By completing the vector space $L^2(\Gamma)$ with respect to the $\mathcal{S}$ norm
\begin{equation} \label{eq:snorm}
  \|f\|^2_{\SG}=\langle \SG f, f \rangle_{L^2(\Gamma)},
\end{equation}
$\KsG$ is extended by continuity to a self-adjoint operator.  This completion space coincides with the Sobolev space $\Hmhalf(\Gamma)$ of distributions~\cite[Lemma~3.2]{PerfektPutinar2014}, which is sometimes referred to as the ``energy space" for~$\KsG$.
In this article, $\Hmhalf(\Gamma)$ will always refer to the Hilbert space with the $\mathcal{S}$ inner product $\langle f, g \rangle_{\SG}$.

The operator norm of $\KsG$, as a self-adjoint operator in $\Hmhalf(\Gamma)$, is equal to $1/2$, and the spectrum is contained in the half-open interval $(-1/2,1/2]$, with $1/2$ being an eigenvalue.  The eigenspace is spanned by the density for a single-layer potential that is constant in the interior domain of $\Gamma$~\cite{Kellogg1929}.

The analogous space in which $\KG$ is self-adjoint is~$\Hhalf(\Gamma)\!\subset\!L^2(\Gamma)$
with respect to the norm $(\SG^{-1} f,g)_{L^2(\Gamma)}$.
Therefore, any eigenfunction of $\KG$ corresponding to a non-real eigenvalue $\lambda$ cannot lie in $\Hhalf(\Gamma)$.  When $\Gamma$ is a curvilinear polygon, $\KG$ does admit non-real eigenvalues with eigenfunctions in $L^2(\Gamma)$.  Mitrea~\cite{Mitrea2002} proved that these eigenvalues fill the interior domains of bowtie-shaped curves in the complex plane that are symmetric about the real line, one for each corner.  The curves themselves consist of essential spectrum.  The operator~$\KsG$, on the other hand, being self-adjoint in~$\Hmhalf(\Gamma)$ with respect to the inner product $\langle f, g \rangle_{\SG}$, cannot have non-real eigenvalues with eigenfunctions in $L^2(\Gamma)\!\subset\!\Hmhalf(\Gamma)$.  This means that, for a non-real eigenvalue $\lambda$ of $\KG$, the operator $\KsG-\bar\lambda I$ acting on $L^2(\Gamma)$ is injective and has range that is not dense in $L^2(\Gamma)$; such $\bar\lambda$ is in the residual spectrum of $\KsG$ as an operator on $L^2(\Gamma)$.
Helsing and Perfekt~\cite{HelsingPerfekt2017} proved that, for a domain in $\RR^3$ with a single conical point and continuous rotational symmetry, this spectrum consists of an infinite union of conjugate-symmetric domains in the complex plane corresponding to the Fourier components.

In $\Hmhalf(\Gamma)$, where $\KsG$ is self-adjoint, the essential spectrum of $\KsG$ for a curvilinear polygon consists of an interval in the real line that is symmetric about $0$~\cite{Carleman1916,PerfektPutinar2014,PerfektPutinar2017}.  Each corner of~$\Gamma$ contributes an interval $[-b,b]$ to the essential spectrum, and $b$ varies monotonically between $0$ and $1/2$ as the corner becomes sharper, as described in section~\ref{sec:essential}.  When the corner is outward-pointing and $\Gamma$ has reflectional symmetry about a line~$L$ with the tip of the corner on $L$, the interval $[-b,0]$ is the essential spectrum for the odd component of $\KsG$ and $[0,b]$ is the essential spectrum for the even component~\cite{KangLimYu2017}.  When the corner is inward-pointing, this correspondence is switched.  This separation of even and odd essential spectrum is critical in our proof of eigenvalues within the essential spectrum.  

In his 1916 dissertation~\cite{Carleman1916}, Torsten Carleman considered eigenfunctions of the operator $\KsG$ that are orthonormal with respect to the $\mathcal{S}$ inner product (p.~157--178 and equation~(194)),  as well as generalized eigenfunctions for a curve with corners.  At the end of the work (p.~193), he writes a spectral representation for $\KsG g$ in terms of a sum over eigenfunctions plus an integral over generalized eigenfunctions, for functions $g$ that have finite $\mathcal{S}$ norm.  The validity of this analysis for $\KsG$ in the space $\Hmhalf(\Gamma)$ would establish the absolute continuity of the essential spectrum associated with the generalized eigenfunctions, which causes the eigenvalues of our Theorem~\ref{thm:main} to be embedded in the continuous spectrum.  It is strongly believed, if not generally accepted, that the essential spectrum and the absolutely continuous spectrum coincide.

There is numerical evidence of embedded eigenvalues for the Neumann-Poincar\'e operator.  Helsing, Kang, and Lim~\cite{HelsingKangLim2016} numerically implement a rate-of-resonance criterion and illustrate two eigenvalues within the continuum for an ellipse with an attached corner.  We will revisit this example in discussion point~(5) of section~\ref{sec:discussion}.  For a rotationally symmetric domain in $\RR^3$ with a conical point mentioned above~\cite[\S7.3.3,~Fig.~8]{HelsingPerfekt2017}, eigenvalues for certain Fourier components of the Neumann-Poincar\'e operator are computed, and these lie within the essential spectrum of other Fourier components.

Our strategy for proving eigenvalues in the essential spectrum goes as follows.  Start with a curve $\Gz$ that is of class~$C^{2,\alpha}$ and that is reflectionally symmetric about a line $L$.  Let $\lambda$ be an eigenvalue of $\KsGz$ that is, say, positive with eigenfunction that is, say, odd with respect to $L$.  Then construct a symmetric perturbation $\Gamma$ of $\Gz$ such that (1) $\KsG$ has a positive eigenvalue near $\lambda$ with odd eigenfunction and (2) the even component of $\KsG$ has essential spectrum that overlaps this eigenvalue.  To accomplish the second requirement, $\Gamma$ is constructed by replacing a small segment of $\Gz$ with a corner that connects smoothly to the rest of $\Gz$, with the tip of the corner lying on $L$ and whose angle is such that $\lambda\in(0,b)$.  To accomplish the first requirement, the replaced segment needs to be sufficiently small.  The analysis of requirement (1) is remarkably subtle, and our proof relies on the deep fact that all eigenfunctions of $\KsGz$ as an operator in $\Hmhalf(\Gz)$ are actually in $L^2(\Gz)$.

Perturbative spectral analysis of $\KsG$ in $\Hmhalf(\Gamma)$ relies on the self-adjointness of the operators $\KsG$ in the $\mathcal{S}$ inner product.  But the positive-definiteness of this inner product requires an appropriate choice of the constant $\beta$ in (\ref{eq:single}), and this depends on the domain $\Gamma$.  As $\Gamma$ varies over a family of Lipschitz perturbations of a smooth curve, it must be guaranteed that $\mathcal{S}$ remain positive for all perturbations.  Instead of controlling the number $\beta$, this inconvenience can be dealt with by restricting to the $\KsG$-invariant subspace $\Hmhalfz(\Gamma)$, on which $\langle \cdot,\,\cdot \rangle_{\mathcal S}$ remains positive.  The space $\Hmhalfz(\Gamma)$ consists of all distributions $\psi\in\Hmhalf(\Gamma)$ such that $\langle \psi,1 \rangle=0$ in the $\Hmhalf$-$\Hhalf$ pairing.  The $\mathcal{S}$-orthogonal complement of $\Hmhalfz(\Gamma)$ in $\Hmhalf(\Gamma)$ is spanned by the eigenfunction of $\KsG$ corresponding to eigenvalue $1/2$.
Some interesting aspects of the definiteness of the single-layer potential in two dimensions are investigated in~\cite{Zoalroshd2016}.

\section{Approximate eigenfunction on a perturbed curve}\label{sec:perturbation}

This section accomplishes the first step, which is to construct an approximate eigenfunction $\tilde\phi$ of $\KsG$ for a Lipschitz perturbation $\Gamma$ of a $C^{2,\alpha}$ curve~$\Gz$.
The strategy is as follows.  Start with a curve $\Gz$ of class $C^{2,\alpha}$ and an eigenfunction $\phi$ of $\KsGz$ as an operator in $\Hmhalf(\Gz)$, that is, $\KsGz\phi = \lambda\phi$.  Then construct a Lipschitz perturbation $\Gamma$ of $\Gz$ by replacing a segment of $\Gz$ by a curve with a corner so that the restriction $\tilde\phi$ of $\phi$ to the rest of the curve---which is common to both $\Gz$ and $\Gamma$---is nearly an eigenfunction of~$\KsG$ in the sense that
$
\|(\mathcal K_{\Gamma}^* - \lambda )\tilde\phi \|_{\SG}
 \leq \epsilon\,\|\tilde\phi\|_{\SG}
$.
This is the essence of the proof of Lemma~\ref{lemma:resolvent}, which concludes that the resolvent $(\KsG-\lambda)^{-1}$ can be made as large as desired by taking a fine enough perturbation $\Gamma$.

Our proof of Lemma~\ref{lemma:resolvent} relies on the fact that any eigenfunction of $\KsGz:\Hmhalf(\Gz)\to\Hmhalf(\Gz)$ actually lies in $L^2(\Gz)$.  This was observed by Khavinson, Putinar, and Shapiro~\cite{KhavinsonPutinarShapiro2007,Putinar2017}, in which a theory of M.\,Krein~\cite{Krein1947,Krein1998} on operators in the presence of two norms was brought to bear on the Neumann-Poincar\'e operator.  
Lemma~\ref{lemma:L2} is essentially Theorem~3 of~\cite{Krein1998}.
We include a proof here.

\begin{lemma}\label{lemma:L2}
Let $\Gz$ be a simple closed curve of class $C^2$ in $\RR^2$.
If $\phi\in\Hmhalf(\Gz)$ satisfies $\KsGz\phi=\lambda\phi$ for a nonzero real number $\lambda$, then $\phi\in L^2(\Gz)$.
\end{lemma}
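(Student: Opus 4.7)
The plan is to exploit the Plemelj intertwining $\SGz\KsGz=\KGz\SGz$ together with the $L^2$-compactness of both operators (guaranteed by $\Gz\in C^2$) in order to identify the $\Hmhalf$-eigenspace of $\KsGz$ at $\lambda$ with the $L^2$-eigenspace by a dimension count, thereby avoiding any delicate Sobolev bootstrap.

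The first step is to observe that the $C^2$ regularity of $\Gz$ produces the cancellation $(x-y)\cdot n_y = O(|x-y|^2)$, so the kernels of $\KGz$ and $\KsGz$ are continuous on $\Gz\times\Gz$; each operator is therefore Hilbert--Schmidt, hence compact, on $L^2(\Gz)$, and the two are $L^2$-adjoints of one another. Because $\lambda\in\RR\setminus\{0\}$, the Fredholm alternative gives a common finite dimension
\begin{equation*}
N \;:=\; \dim\ker(\KGz-\lambda I)|_{L^2} \;=\; \dim\ker(\KsGz-\lambda I)|_{L^2} \;<\; \infty.
\end{equation*}

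Next I would squeeze $\dim\ker(\KsGz-\lambda I)|_{\Hmhalf}$ from both sides. From below, the inclusion $L^2\hookrightarrow\Hmhalf$ and the fact that $\KsGz$ on $\Hmhalf$ is the continuous extension of its $L^2$-realization yield $\ker(\KsGz-\lambda I)|_{L^2}\subset\ker(\KsGz-\lambda I)|_{\Hmhalf}$, so the dimension is at least $N$. From above, the Plemelj identity extends from $L^2$ to all of $\Hmhalf$ by density, using that $\KsGz:\Hmhalf\to\Hmhalf$, $\SGz:\Hmhalf\to\Hhalf$ and $\KGz:\Hhalf\to\Hhalf$ are all bounded; since $\SGz:\Hmhalf\to\Hhalf$ is a bounded bijection, it carries $\ker(\KsGz-\lambda I)|_{\Hmhalf}$ isomorphically onto $\ker(\KGz-\lambda I)|_{\Hhalf}$, which in turn embeds into $\ker(\KGz-\lambda I)|_{L^2}$ via $\Hhalf\hookrightarrow L^2$; this caps the dimension at $N$. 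Combining the two bounds with the inclusion of eigenspaces forces $\ker(\KsGz-\lambda I)|_{\Hmhalf}=\ker(\KsGz-\lambda I)|_{L^2}$, and in particular $\phi\in L^2(\Gz)$.

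The subtlest point I anticipate is the extension of Plemelj to $\Hmhalf$ together with the compatibility of the actions of $\KGz$ on $\Hhalf$ and on $L^2$ (so that the inclusion of the $\Hhalf$-eigenspace into the $L^2$-eigenspace is automatic); both reduce to the fact that all operators involved are continuous extensions or restrictions of their $L^2$-realizations on the relevant dense subspaces. A more hands-on alternative would be to set $\psi:=\SGz\phi\in\Hhalf$, use $\KGz\psi=\lambda\psi$ together with the smoothing of the $C^2$ double-layer kernel to bootstrap the regularity of $\psi$, and then invert $\SGz$ to recover $\phi\in L^2$; this is more concrete but demands finer Sobolev-scale mapping properties of $\KGz$ that the dimension count sidesteps.
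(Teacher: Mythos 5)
Your proposal is correct, and it takes a genuinely different route from the paper's proof. The paper invokes a theorem of M.\ Krein on operators with two norms: it decomposes $L^2(\Gz) = N \oplus V$ where $N$ is the $L^2$-eigenspace and $V$ its $\SGz$-orthogonal complement, shows $(\KsGz - \lambda I)^{-1}$ is bounded on $V$ in both the $L^2$ and the $\mathcal{S}$ norm, extends this inverse to the $\Hmhalf$-completion $\tilde V$, and deduces that the $\Hmhalf$-nullspace of $\KsGz - \lambda I$ equals $N$. Your argument bypasses Krein's machinery entirely: you instead run the Plemelj intertwining $\SGz\KsGz = \KGz\SGz$ through the Sobolev scale, using that $\SGz \colon \Hmhalf \to \Hhalf$ is a bounded bijection to transplant $\ker(\KsGz - \lambda I)|_{\Hmhalf}$ into $\ker(\KGz - \lambda I)|_{\Hhalf} \subset \ker(\KGz - \lambda I)|_{L^2}$, which by the Fredholm alternative has the same dimension $N$ as $\ker(\KsGz - \lambda I)|_{L^2}$; combined with the trivial inclusion $\ker|_{L^2} \subset \ker|_{\Hmhalf}$ and finite-dimensionality, the sandwich closes. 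Both proofs ultimately rest on the same ingredients -- $L^2$-compactness of the operators on a $C^2$ curve, the isomorphism $\SGz \colon \Hmhalf \to \Hhalf$, and density of $L^2$ in $\Hmhalf$ -- but yours is more elementary and concrete, trading the abstract two-norms theorem for explicit mapping properties on the Sobolev scale. The paper's Krein-based route has the advantage of being framed in a form that generalizes to situations where no explicit symmetrizer like $\SGz$ with a known Sobolev-scale isomorphism is available, while yours has the advantage of being entirely self-contained once the standard boundedness results of Costabel--Stephan are in hand. One small point of precision: continuity of the kernel of $\KsGz$ on the diagonal is a $C^{2}$ (or $C^{1,\alpha}$ with weak singularity) fact, but all your argument truly needs is $L^2$-compactness, which is the weaker hypothesis the paper also cites; either way the step is correct.
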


\begin{proof}
Let $\beta$ in the kernel of $\SGz$ (\ref{eq:single}) be chosen such that $\langle \cdot,\,\cdot \rangle_{\SGz}$ is positive definite on $\Hmhalf(\Gz)$.
Let $\lambda$ be a nonzero real number.  Let $N$ denote the nullspace of $\KsGz-\lambda I$ in $L^2(\Gz)$, and let $V$ denote its complement with respect to the inner product induced by the single-layer operator $\SGz$,
\begin{align}
  N &:= \; \big\{ f\in L^2(\Gz)  :  (\KsGz \!-\! \lambda I)f = 0 \big\},  \\
  V &:= \; \big\{ f\in L^2(\Gz) : \langle f,\,g \rangle_{\SGz} = 0\; \forall\,g\in N \big\}.
\end{align}
The space $V$ is closed in $L^2(\Gz)$, and $L^2(\Gz)=N+V$ as an algebraic direct sum.
The operator $\KsGz\!-\!\lambda I$ is invariant on $V$ because of the symmetry of $\KsGz$ with respect to $\langle \cdot,\cdot \rangle_{\SGz}$.  Its restriction to $V$ is injective and $\KsGz$ restricted to $V$ is compact in the $L^2(\Gz)$ norm because $\KsGz$ is compact in $L^2(\Gz)$~\cite{CostabelStephan1985,PerfektPutinar2014}.  This implies that 
$\KsGz\!-\!\lambda I$ is surjective on $V$, using the fact that the Fredholm index of $\KsGz\!-\!\lambda I$ on $V$ is zero.
Therefore $(\KsGz\!-\!\lambda I)^{-1} : V\to V$ exists as a bounded operator in the $L^2(\Gz)$ norm with
$(\KsGz\!-\!\lambda I)^{-1} (\KsGz\!-\!\lambda I)$ being the identity operator on~$V$.

The symmetry of $\KsGz$ with respect to $\langle \cdot,\cdot \rangle_{\SGz}$ implies that $(\KsGz\!-\!\lambda I)^{-1}$ is also symmetric with respect to this inner product.  The key step of the proof is now an application of Theorem~1 in~\cite{Krein1998}.  Since the $\mathcal{S}$~norm is weaker than the $L^2$ norm, this symmetry implies that $(\KsGz\!-\!\lambda I)$ and $(\KsGz\!-\!\lambda I)^{-1}$ are bounded when considered as operators in $V$, viewed as an incomplete normed linear space with respect to $\|\cdot\|_{\SGz}$.
Since $\|\cdot\|_{\SGz}$ is equivalent to the $\Hmhalf(\Gz)$ norm, $\KsGz\!-\!\lambda I$ and $(\KsGz\!-\!\lambda I)^{-1}$ extend uniquely to the completion $\tilde V$ of $V$ in $\Hmhalf(\Gz)$, and the composition
$(\KsGz\!-\!\lambda I)^{-1} (\KsGz\!-\!\lambda I)|_V$ lifts to the identity operator on $\tilde V$~\cite[Theorem~2]{Krein1998}.

Since $N$ is finite dimensional and $L^2(\Gz)=N+V$, one has $\Hmhalf(\Gz)=N+\tilde V$.
And since $\KsGz\!-\!\lambda I$ is invertible on $\tilde V$ and $(\KsGz\!-\!\lambda I)[N]=\{0\}$, it follows that the nullspace of $\KsGz-\lambda I$ in $\Hmhalf(\Gz)$ is equal to~$N$,
\begin{equation}
  \big\{ f\in \Hmhalf(\Gz) : (\KsGz\!-\!\lambda I)f=0 \big\} \,=\, N.
\end{equation}
This implies that every eigenfunction $\KsGz$ that is in $\Hmhalf(\Gz)$ also lies in $L^2(\Gz)$.
\end{proof}

If the curve $\Sigma$ (which could be either $\Gz$ or $\Gamma$) admits reflection symmetry about a line $L$, one has a decomposition
\begin{equation}
  \Hmhalf(\Sigma) = \Hmhalfe(\Sigma) \oplus \Hmhalfo(\Sigma)
\end{equation}
into spaces of even and odd distributions with respect to $L$.  This is an orthogonal direct sum with respect to the $\mathcal{S}$ inner product.
Since the operator $\KsS$ commutes with reflection symmetry, this decomposition of $\Hmhalf(\Sigma)$ induces a decomposition of $\KsS$ onto the even and odd distribution spaces, on which it is invariant,
\begin{equation}\label{eq:Kdecomposition}
  \KsS = \KsSe\oplus\KsSo.
\end{equation}

The Lipschitz perturbations of $\Gz$ and near-eigenfunctions constructed in this section have to be controlled in a careful way.  We therefore make a precise definition of the type of perturbation we will use.  It is by no means the most general.  The specific geometry of the corner is not important but serves to simplify the proofs; indeed, the invariance of the essential spectrum under smooth perturbations of a Lipschitz curve that preserve the angles of the corners is proved in~\cite[Lemma~4.3]{PerfektPutinar2014}.  The perturbed curves $\Gamma$ constructed in Definition~\ref{def:Tperturbation} have corners that are locally identical to a corner of a prototypical simple closed Lipschitz curve featuring a desired half exterior angle $\theta$ with $0<\theta<\pi$.  This curve  is the boundary $\partial\Omega$ of a region $\Omega$ defined by two intersecting circles of the same radius, as illustrated in Fig.~\ref{fig:disks}.  Explicit spectral analysis of these domains has been carried out by Kang, Lim, and Yu~\cite{KangLimYu2017} and will be used in the analysis in section~\ref{sec:essential}.

\begin{figure}  
  \centerline{\scalebox{0.33}{\includegraphics{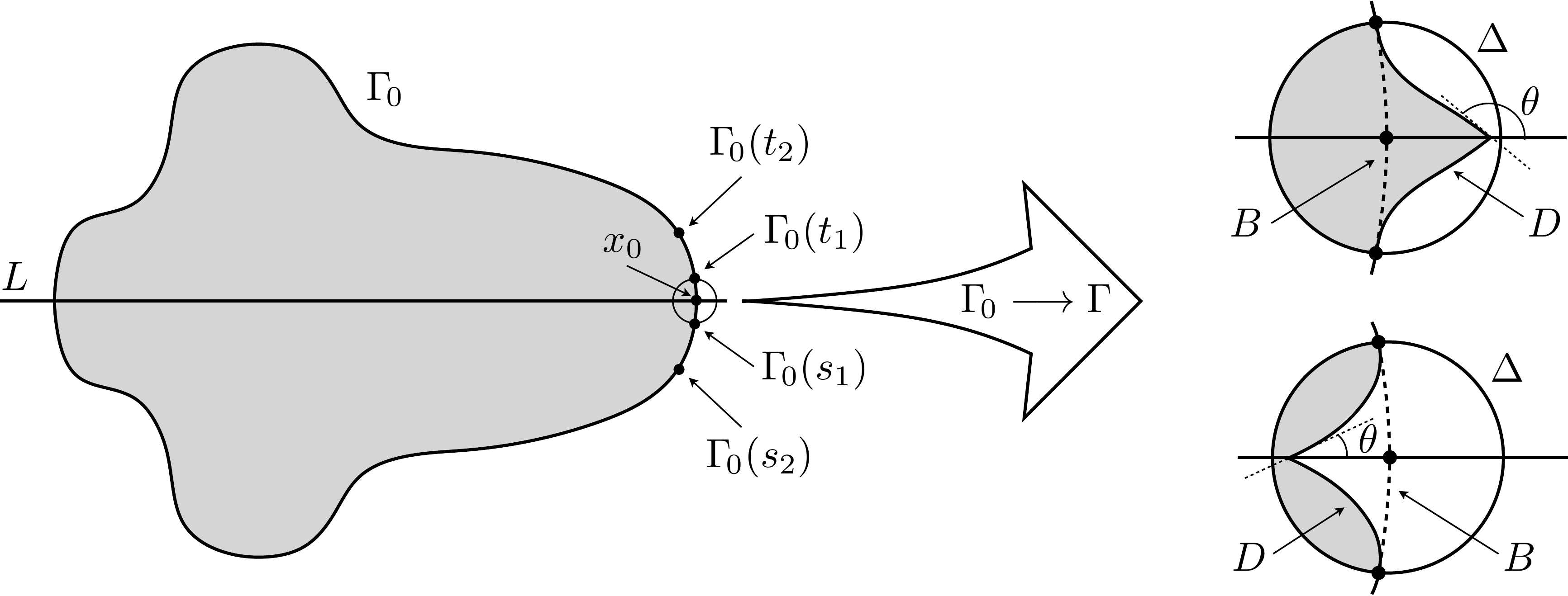}}}
\caption{\small A type \type perturbation of a curve $\Gz$ of class $C^{2,\alpha}$, as described in Definition~\ref{def:Tperturbation}, with reflectional symmetry about the line $L$.  The segment $B$ of $\Gz$ that is contained in the disk $\Delta$ is replaced by a curve with a corner to obtain~$\Gamma$.  In the upper case where the half exterior angle satisfies $\pi/2<\theta<\pi$, the corner is pointing outward; and in the lower case where $0<\theta<\pi/2$, the corner is pointing inward.  The curve $\Gz$ is parameterized by the interval $[0,1]$ with $\Gz(0)=\Gz(1)=x_0$ and $0<t_1<t_2<s_2<s_1<1$.}
\label{fig:Tperturbation}
\end{figure}

\begin{figure}
  \centerline{\scalebox{0.3}{\includegraphics{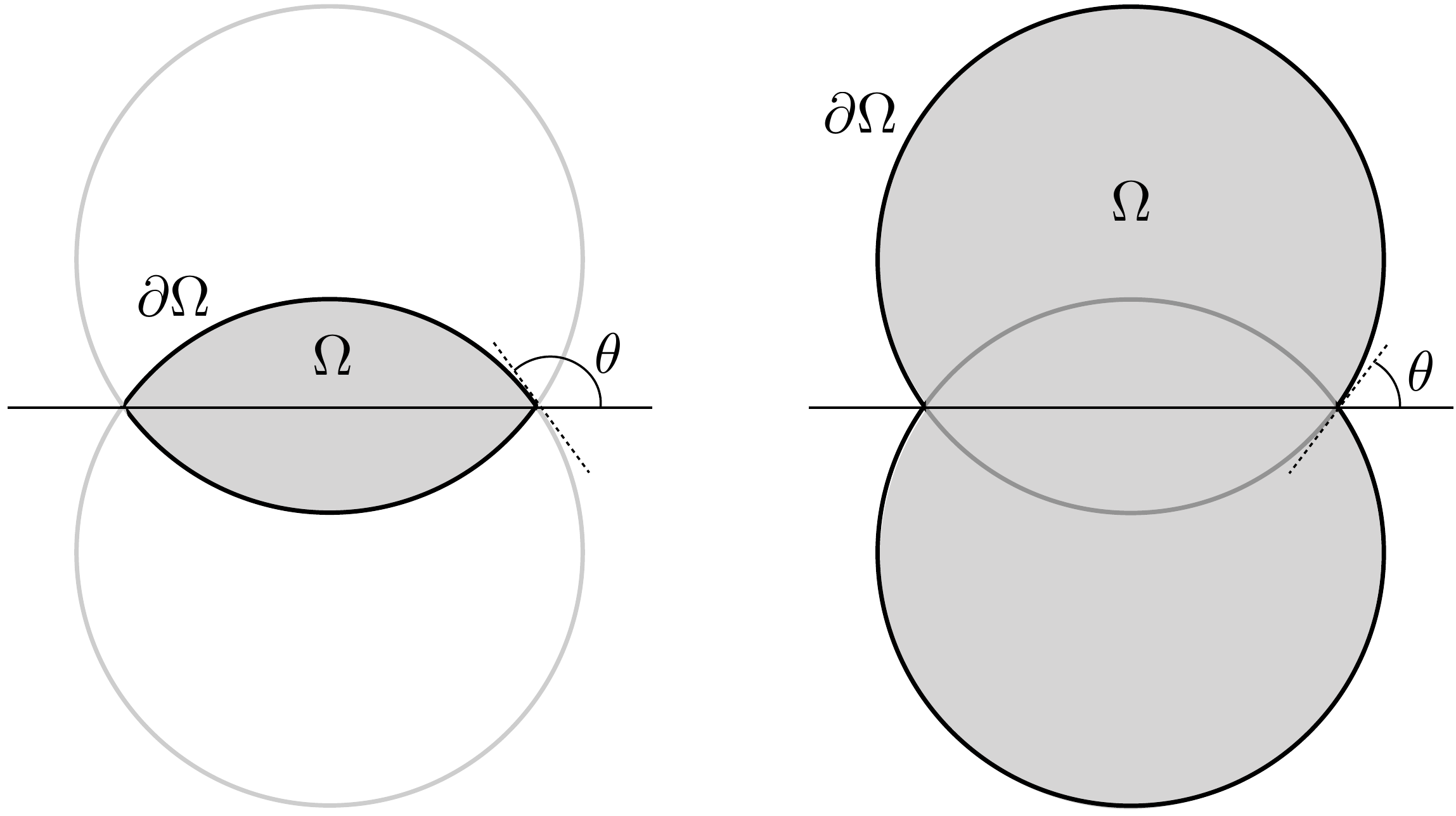}}}
\caption{\small The boundary $\partial\Omega$ of a bounded domain $\Omega$ defined by two intersecting circles of the same radius is the prototype of a curvilinear polygon.  On the left, the outward-pointing corner has half exterior angle $\theta: \pi/2<\theta<\pi$; and on the right, the inward-pointing corner has half exterior angle $\theta: 0<\theta<\pi/2$.}
\label{fig:disks}
\end{figure}

\begin{definition}\label{def:Tperturbation}
Let $\Gz$ be a simple closed curve of class $C^{2,\alpha}$ ($\alpha>0$) in $\RR^2$.
 A {\em type \type perturbation} of $\Gz$ is a curve $\Gamma$ that has one corner with half exterior angle given arbitrarily by $\theta : 0<\theta<\pi$ and is otherwise of class $C^{2,\alpha}$, and that is equipped with the following structure.

\smallskip
(a) Let $x_0\in\Gz$ be a reference point, and let $\Gz$ be parameterized by the unit interval $[0,1]$ (using the notation $\Gz(t)$ for $t\in[0,1]$) with $\Gz(0)=\Gz(1)=x_0$.

\smallskip
(b) Let $\disk=\left\{ x : | x - x_0 | \leq \delta \right\}$ be a disk that intersects $\Gz$ in a connected segment $B$ of $\Gz$ about $x_0$, that is, such that for some numbers $t_1$ and $s_1$ with $0<t_1<s_1<1$,
\begin{equation}
  B \;:=\; \disk \cap \Gz \;=\; \left\{\, \Gz(t) : t\in [0,t_1] \cup [s_1,1] \,\right\}.
\end{equation}
Denote the complementary connected component of $\Gz$ by
$A=\Gz[(t_1,s_1)]$, so that
\begin{equation}
  \Gz \,=\, A\, \mathring\cup\, B.
\end{equation}

(c) Let numbers $t_2$ and $s_2$ in $[0,1]$ such that $0<t_1<t_2<s_2<s_1<1$ be given, so that $\Gz(t_2)$ and $\Gz(s_2)$ lie in $A$.  Let $A'$ denote the subsegment of $A$ equal to $\Gz[(t_2,s_2)]$.

\smallskip
(d) A type \type perturbation $\Gamma$ of $\Gz$ is obtained by replacing $B$ by a simple Lipschitz perturbation curve $D$ which connects in a $C^2$ manner to the boundary points $\Gz(t_1)$ and $\Gz(s_1)$ of $A$ and which is otherwise contained in the interior of~$\Delta$.  $D$ is $C^{2,\alpha}$ except at one interior point $x'_0$ of $D$.  An open subset of $D$ containing $x'_0$ coincides with a translation-rotation of the intersection of a disk $\Delta'$ of radius $\delta'<\delta$ with a corner of a curve $\partial\Omega$ obtained from two intersecting circles of the same radius, oriented such that the exterior angle is equal to $2\theta$, as described in Fig.~\ref{fig:disks}.
\end{definition}

Lemma~\ref{lemma:resolvent} is the workhorse of the main theorem on eigenvalues in the essential spectrum (Theorem~\ref{thm:main}).
The type \type perturbations $\Gamma$ will be required to satisfy a certain Lipschitz condition that will ensure, according to Lemma~\ref{lemma:Lipschitz}, that $\SG$ is uniformly controlled in the $L^2(\Gamma)$ norm.  For the construction of such perturbations in Lemma~\ref{lemma:existence}, the Lipschitz constant $M$ will depend on the angle $\theta$ of the corner.

\begin{condition}[Lipschitz condition]\label{cond:Lipschitz}
Let $\Gz$ be a simple closed curve of class $C^2$ in $\RR^2$.
Let a triple $(U,\Delta_0,M)$ for $\Gamma_0$ be given, in which $\Delta_0$ is a closed disk contained in an open subset $\,U$ of $\,\RR^2$, such that $\Delta_0\cap\Gz$ is a simple curve of nonzero length, $M$ is a positive real number, and $U\cap\Gz$ is the graph of a function in some rotated coordinate system for $\RR^2$ with Lipschitz constant less than $M$.
A perturbation curve $\Gamma$ of $\Gz$ satisfies the {\em Lipschitz condition subject to the triple $(U,\Delta_0,M)$}, if the perturbation is confined to $\Delta_0$, that is,
$\Gz\setminus\Delta_0=\Gamma\setminus\Delta_0$, and $U\!\cap\Gamma$ is the graph of a function in some rotated coordinate system for $\RR^2$ with Lipschitz constant less than $M$.
\end{condition}

\begin{lemma}\label{lemma:Lipschitz}
Let $\Gz$ be a simple closed curve of class $C^2$ in $\RR^2$, and let $(U,\Delta_0,M)$ be a triple for $\Gz$ as described in Condition~\ref{cond:Lipschitz}.  There exists a constant $\CS>0$ such that, for each perturbation $\Gamma$ of $\Gz$ that satisfies the Lipschitz Condition~\ref{cond:Lipschitz} subject to the triple $(U,\Delta_0,M)$,
\begin{equation}
  \left| (\SG\psi,\psi)_{L^2(\Gamma)} \right| \,\leq\, \CS^2 (\psi,\psi)_{L^2(\Gamma)}
  \qquad \forall\psi\in L^2(\Gamma).
\end{equation}
\end{lemma}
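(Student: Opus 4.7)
The plan is to reduce the claimed quadratic-form bound to an $L^2\to L^2$ operator-norm bound for $\SG$ and then derive the latter by Schur's test. The kernel $K(x,y)=-(2\pi)^{-1}\log(\beta|x-y|)$ is real and symmetric, so $\SG$ is self-adjoint on $L^2(\Gamma)$ and $|(\SG\psi,\psi)_{L^2(\Gamma)}|\leq\|\SG\|_{L^2\to L^2}\,(\psi,\psi)_{L^2(\Gamma)}$. It thus suffices to produce a constant $C$ depending on $\Gz,U,\Delta_0,M,\beta$ but not on the particular perturbation $\Gamma$ such that
\[
  \sup_{x\in\Gamma}\int_\Gamma\big|\log(\beta|x-y|)\big|\,ds(y)\;\leq\;C,
\]
after which one may take $\CS^2 = C/(2\pi)$.

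Condition~\ref{cond:Lipschitz} supplies two uniform geometric features of $\Gamma$. First, $\Gamma\setminus U$ coincides with the fixed smooth curve $\Gz\setminus U$, while $\Gamma\cap U$ is the graph of a Lipschitz function of slope less than $M$ over a fixed base interval, so its arclength is at most $\sqrt{1+M^2}$ times the base length; this gives a uniform upper bound on $\meas(\Gamma)$. Second, $\Gamma$ is uniformly Ahlfors regular: there is a constant $C_*=C_*(\Gz,U,M)$ with $\meas(\Gamma\cap B(x,r))\leq C_*\,r$ for every $x\in\RR^2$ and $r>0$. For the graph portion this is because the orthogonal projection of $B(x,r)$ onto the base line has length at most $2r$ and the graph Jacobian is at most $\sqrt{1+M^2}$; for the fixed $C^2$ portion it follows from smoothness and compactness of $\Gz\setminus U$; when a ball straddles the interface $\partial U\cap\Gz$ the two bounds simply add.

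The Schur integral is then estimated by splitting at the scale $|x-y|=1$. Where $|x-y|\geq 1$ the integrand is bounded by $|\log\beta|+\log\mathrm{diam}(\Gz\cup\Delta_0)$, and the integral is controlled by $\meas(\Gamma)$. Where $|x-y|<1$ a dyadic decomposition combined with the Ahlfors bound gives
\[
  \int_{\Gamma\cap B(x,1)}\!\!\big|\log|x-y|\big|\,ds(y)
  \;\leq\;\sum_{k=0}^\infty (k{+}1)(\log 2)\,C_*\,2^{-k}\;<\;\infty,
\]
uniformly in $x\in\Gamma$ and in the admissible perturbation. Summing the two pieces yields the desired constant. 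The main subtlety is securing the uniform Ahlfors bound across all admissible $\Gamma$, particularly near the interface where the Lipschitz-graph piece meets the fixed $C^2$ piece; this is exactly what is encoded in the triple $(U,\Delta_0,M)$ of Condition~\ref{cond:Lipschitz}, so the additive split above closes the argument cleanly.
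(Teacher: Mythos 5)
Your proof is correct, and it takes a genuinely different route from the paper's. Both reduce the quadratic-form estimate to a uniform Schur bound $\sup_{x\in\Gamma}\int_\Gamma\big|\log(\beta|x-y|)\big|\,ds(y)\leq C$, but the paper proves this by covering $\Gamma$ with finitely many charts $\{U^i\}$ in which the curve is a Lipschitz graph, then choosing a uniform ``Lebesgue number'' scale $r_0$ so that around every $x\in\Gamma$ there is an arclength-$2r_0$ arc $\gamma_x$ lying in one chart; the near part of the integral is estimated in coordinates via $\int_{-r_0}^{r_0}|\log|r||\sqrt{M^2+1}\,dr$, and the far part by showing that $r_1(\Gamma)=\inf_x\dist(x,\Gamma\setminus\gamma_x)$ and the diameter $r_2(\Gamma)$ are uniformly controlled. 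You instead package the same geometric input into a single scale-free statement, upper Ahlfors regularity $\meas(\Gamma\cap B(x,r))\leq C_*r$ with $C_*$ independent of the admissible $\Gamma$, and then estimate the near part by a dyadic annular sum and the far part trivially by arclength and diameter bounds. What each buys: the paper's argument is entirely self-contained and elementary, at the price of bookkeeping the cover, the scale $r_0$, and the quantity $r_1(\Gamma)$; yours is cleaner and avoids choosing a chart-dependent scale altogether, with the uniformity concentrated in the single constant $C_*$ (whose verification is essentially what you sketched: graph Jacobian $\sqrt{1+M^2}$ over $U$, compactness and $C^2$ smoothness for the fixed part $\Gz\setminus U$, additivity for balls straddling both). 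Two cosmetic remarks: you only use the upper half of Ahlfors regularity, so ``upper Ahlfors regular'' (or ``upper $1$-regular'') is the precise term; and in the near-part dyadic display the constant $\log\beta$ should be split off and absorbed into the far-part-style term $|\log\beta|\,\meas(\Gamma)$, since your annular estimate handles only $\log|x-y|$. Neither affects correctness.
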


\begin{proof}
In (\ref{eq:single}), we assume $\beta=1$; the proof is similar for general $\beta>0$.
It will first be proved that there exists a constant $C$ such that for every curve $\Gamma$ satisfying the conditions of Lemma~\ref{lemma:Lipschitz},
\begin{equation}\label{eq:intbd}
\sup_{x\in \Gamma} \frac{1}{2\pi} \int_{\Gamma}\big| \log|x-y| \big| d\sigma_y  \;\leq\; C. 
\end{equation} 
Suppose $\Gamma$ is any such curve.  The constant $C$ obtained by the following analysis will not depend on the particular choice of $\Gamma$.

The conditions in Lemma \ref{lemma:Lipschitz} guarantee that there exists a collection $\{U^i\}_{i=0}^N$ of open subsets of $\RR^2$, independent of $\Gamma$, and rotated coordinate systems $\{(\xi^i,\eta^i)\}_{i=0}^N$ for $\RR^2$ such that
$U^0=U$,
$U^i \cap \Delta_0 = \emptyset$ for $i=1,\dots,N$,
$\{U^i\}_{i=0}^N$ covers $\Gamma$,
and for $i=0,\dots,N$, the intersection $U^i\!\cap\Gamma$ is the graph $\eta^i=f^i(\xi^i)$ of a Lipschitz function $f^i$ on an interval $(\xi^i_1,\xi^i_2)$.  The collection $\{U^i\}_{i=1}^N$ can be taken to be fine enough so that all the functions $f^i$ have Lipschitz constant bounded by $M$.

Denote the arclength of any curve $\gamma$ by $\meas(\gamma)$.
For the cover $\{U^i\}_{i=0}^N$, there exists a number $r_0: 0<r_0< 1$, such that for every $x\in \Gamma$, there exists an integer $i_x:0 \leq i_x \leq N$ and a segment $\gamma_x$ of $\Gamma$ such that $x\in\gamma_x \subset U^{i_x}$ and $\meas(\gamma_x)=2r_0$, with $x$ located at the center of $\gamma_x$ with respect to arclength.
Inside the chart $U^{i_x}$, $\gamma_x$ is parameterized by $\eta_{i_x} = f^{i_x}(\xi^{i_x})$ for $\xi^{i_x}\in(\xi^{i_x}_1,\xi^{i_x}_2)$.  With $x$ equal to the point $(\xi^{i_x}_0,f^{i_x}(\xi^{i_x}_0))$, it follows that $|\xi^{i_x}_1-\xi^{i_x}_0|\leq r_0$ and $|\xi^{i_x}_2-\xi^{i_x}_0|\leq r_0$.
The number $r_0$ can be taken to be independent of the choice of $\Gamma$ satisfying the Lipschitz Condition~\ref{cond:Lipschitz} subject to the triple $(U,\Delta_0,M)$ because $\Gamma$ differs from $\Gz$ only within the disk $\Delta_0$.

The integral in \eqref{eq:intbd} can be split into two parts,
\begin{equation}\label{split}
\int_{\Gamma} \big| \log|x-y| \big| d\sigma_y \;=\; \int_{\gamma_x} \big| \log|x-y| \big| d\sigma_y \,+\,  \int_{\Gamma\setminus\gamma_x} \big| \log|x-y| \big| d\sigma_y.
\end{equation} 
The first term is bounded by
\begin{align}
\int_{\gamma_x} \big| \log|x-y| \big| d\sigma_y
&\;=\; \int_{ (\xi^{i_x}_1, \xi^{i_x}_2)} \left| \log \sqrt{(\xi^{i_x}_0 - \xi^{i_x})^2 +(f^{i_x}(\xi^{i_x}_0) - f^{i_x}(\xi^{i_x}))^2\,}\right| d\sigma(\xi^{i_x}) \label{hi} \\
&\;\leq\; \int_{ (\xi^i_{x1}, \xi^i_{x2})}  \left| \log | \xi^{i_x}_0 - \xi^{i_x} | \right| \sqrt{M^2 +1}\, d\xi^{i_x} \\
&\;\leq\; \int_{ (-r_0, r_0)} \big| \log |r| \big| \sqrt{M^2 +1}\, dr 
\;=\; C',
\end{align} 
where $C'$ is a finite constant.  This constant depends only on $r_0$ and $M$ and is therefore independent of the choice of $\Gamma$ satisfying the Lipschitz Condition~\ref{cond:Lipschitz} subject to the triple $(U,\Delta_0,M)$.  The first inequality comes from $r_0<1$, which makes the argument of the logarithm of~(\ref{hi}) less than~$1$.
The second term of~(\ref{split}) is bounded~by
\begin{equation}\label{bound2}
\int_{\Gamma\setminus\gamma_x} \big| \log|x-y| \big| d\sigma_y \;\leq\; \meas(\Gamma)\max\left(\big|\log|r_1(\Gamma)| \big|, \big|\log|r_2(\Gamma)| \big|\right),
\end{equation} 
where $r_1(\Gamma): = \inf_{x\in \Gamma} \text{dist}(x, \Gamma\setminus\gamma_x)$ and $r_2(\Gamma)$ is the radius of $\Gamma$. For $\Gamma$ satisfying the Lipschitz Condition~\ref{cond:Lipschitz} subject to the triple $(U,\Delta_0,M)$, $\meas(\Gamma)$ is uniformly bounded from above and both $r_1(\Gamma)$ and $r_2(\Gamma)$ are uniformly bounded from above and below by positive numbers.  Therefore, the right-hand side of (\ref{bound2}) is bounded by a constant $C''$ that does not depend on this choice of $\Gamma$.

With the constant $C=(C'+C'')/(2\pi)$, the bound $\eqref{eq:intbd}$ is proved for all curves $\Gamma$ satisfying the Lipschitz Condition~\ref{cond:Lipschitz} subject to the triple $(U,\Delta_0,M)$.
By Young's generalized inequality~\cite[Theorem~0.10]{Folland1995}, \eqref{eq:intbd} implies that
\begin{equation}
\left\| \SG\psi \right\|_{L_2(\Gamma)} \leq C \left\| \psi \right\|_{L_2(\Gamma)}
\end{equation} 
for all such curves $\Gamma$.  Thus the conclusion of Lemma \ref{lemma:Lipschitz} holds for $\CS = \sqrt{C}$.
\end{proof}

For the proof of Lemma \ref{lemma:resolvent}, we will work within the spaces $\Hmhalfz(\Gamma)$ to ensure that $\langle \cdot,\,\cdot \rangle_{\SG}$ remains positive definite.  
In $\Hmhalfz(\Gamma)$, the $\mathcal{S}$ inner product is independent of the choice of $\beta>0$ in the single-layer potential operator~(\ref{eq:single}).  We set $\beta=1$.

\begin{lemma}\label{lemma:resolvent}
  Let a simple closed curve $\Gz$ of class $C^{2,\alpha}$ ($\alpha>0$) in $\RR^2$, an eigenvalue $\lambda\not\in\left\{ 0, \frac{1}{2} \right\}$ of $\KsGz$, and a number $\epsilon>0$ be given;
 and let a triple $(U,\Delta_0,M)$ for $\Gz$ be given as in Condition~\ref{cond:Lipschitz}.

(1) There exist numbers $r>0$ and $\rho>0$ such that, 
for each type \type perturbation $\Gamma$ of $\Gz$ that satisfies the Lipschitz Condition~\ref{cond:Lipschitz} subject to $(U,\Delta_0,M)$, and the condition
\begin{equation}\label{condition1}
 0<t_2<r, \qquad 0<1-s_2<r, 
\end{equation}
and the condition
\begin{equation}\label{condition2}
  \frac{\sqrt{\meas(D)\,}}{\dist(A', D)} \;<\; \rho\,,
\end{equation}
($\meas(D)$ is the arclength of the curve $D$), there exists $\psi \in \Hmhalfz(\Gamma)$ satisfying
\begin{equation}\label{Sbound0}
\langle (\mathcal K_{\Gamma}^* - \lambda )\psi, (\mathcal K_{\Gamma}^* - \lambda )\psi \rangle_{\SG}
 \;\leq\; \epsilon^2\, \langle \psi, \psi  \rangle_{\SG}\,.
\end{equation}
Thus, either $\lambda\in\sigma(\KsG)$ or
\begin{equation}\label{eq:resolventlowerbound}
  \left\| (\KsG-\lambda)^{-1} \right\|_{\SG} > \epsilon^{-1}
\end{equation}
where $\KsG$ is considered as an operator in $\Hmhalfz(\Gamma)$.

(2) If $\Gz$ has reflectional symmetry about a line $L$ and $\Delta_0$ contains an intersection point of $L$ and $\Gamma_0$ and $\lambda$ is an eigenvalue of the even component $\KsGze$ of $\KsGz$ (or the odd component $\KsGzo$), then~(\ref{eq:resolventlowerbound}) can be replaced~by 
\begin{equation}\label{eq:resolventlowerboundeven}
  \left\| (\KsGe-\lambda)^{-1} \right\|_{\SG} > \epsilon^{-1}
  \qquad\big(\text{or}\;\;
  \left\| (\KsGo-\lambda)^{-1} \right\|_{\SG} > \epsilon^{-1} \,\big) 
\end{equation}
(considered as an operator in the even (odd) subspace of $\Hmhalfz(\Gamma)$)
for each type \type perturbation $\Gamma$ of $\Gz$ that has reflectional symmetry about $L$, satisfies the Lipschitz Condition~\ref{cond:Lipschitz} subject to $(U,\Delta_0,M)$, and satisfies~(\ref{condition1}) and~(\ref{condition2}).
\end{lemma}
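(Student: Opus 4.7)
The plan is to construct the approximate eigenfunction $\psi$ by restricting an eigenfunction $\phi$ of $\KsGz$ to the common segment $A'\subset A=\Gz\cap\Gamma$ and then subtracting a constant so the result lies in $\Hmhalfz(\Gamma)$. By Lemma~\ref{lemma:L2}, the eigenfunction $\phi$ with $\KsGz\phi=\lambda\phi$ lies in $L^2(\Gz)$; since $\KsGz$ is self-adjoint on $\Hmhalf(\Gz)$ and $\lambda\neq\tfrac12$, $\phi$ is $\SGz$-orthogonal to the $\tfrac12$-eigenspace, so $\phi\in\Hmhalfz(\Gz)$ and $\int_{\Gz}\phi=0$. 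I would then define $\tilde\phi\in L^2(\Gamma)$ by $\tilde\phi=\phi$ on $A'$ and $\tilde\phi=0$ elsewhere on $\Gamma$, and set
\[
\psi \,=\, \tilde\phi - \bar c,\qquad \bar c := \meas(\Gamma)^{-1}\!\int_{\Gamma}\tilde\phi,
\]
so that $\psi\in\Hmhalfz(\Gamma)$. The identity $\int_{A'}\phi=-\int_{\Gz\setminus A'}\phi$ together with Cauchy--Schwarz and $\phi\in L^2(\Gz)$ forces $|\bar c|\to 0$ as $r\to 0$.

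Next I would estimate $\|(\KsG-\lambda)\tilde\phi\|_{L^2(\Gamma)}$ by splitting $\Gamma=A'\cup(A\setminus A')\cup D$. On $A\subset\Gz\cap\Gamma$ the outward normals of $\Gz$ and $\Gamma$ coincide, so $\KsGz\phi=\lambda\phi$ gives
\[
(\KsG-\lambda)\tilde\phi(x)=-\frac{1}{2\pi}\int_{\Gz\setminus A'}\!\phi(y)\,\frac{(x-y)\cdot n_x}{|x-y|^2}\,ds(y)\quad\text{for }x\in A',
\]
which is $L^2(A')$-small because the double-layer kernel is uniformly bounded on the $C^{2,\alpha}$ curve $\Gz$ and $\|\phi\|_{L^2(\Gz\setminus A')}\to 0$ under~(\ref{condition1}). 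On $A\setminus A'$ the analogous expression appears together with an extra $\lambda\phi(x)$ term (since $\tilde\phi(x)=0$), both $L^2$-small as $\meas(A\setminus A')\to 0$. On $D$, $\tilde\phi(x)=0$ and $|x-y|\ge\dist(A',D)$ for $y\in A'$, so the kernel is controlled by $1/\dist(A',D)$ and Cauchy--Schwarz with~(\ref{condition2}) give
\[
\|(\KsG-\lambda)\tilde\phi\|_{L^2(D)}\;\leq\;\frac{\sqrt{\meas(D)\,\meas(A')}}{2\pi\,\dist(A',D)}\|\phi\|_{L^2(A')}\;<\;\frac{\rho\sqrt{\meas(\Gz)}}{2\pi}\|\phi\|_{L^2(\Gz)}.
\]

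I would then transfer to the $\mathcal{S}$ norm via Lemma~\ref{lemma:Lipschitz}, which yields $\|f\|_{\SG}\le\CS\|f\|_{L^2(\Gamma)}$ with $\CS$ uniform over admissible perturbations; the mean-correction contributes at most $|\bar c|\,\CS(\|\KsG\mathbf{1}\|_{L^2}+|\lambda|\meas(\Gamma)^{1/2})$ to $\|(\KsG-\lambda)\psi\|_{\SG}$, which also vanishes as $r\to 0$ since the remaining factors are uniformly bounded. For the lower bound on $\|\psi\|_{\SG}$, the crucial observation is that the single-layer kernel $-\log|x-y|/(2\pi)$ depends only on Euclidean positions, so
\[
\|\tilde\phi\|_{\SG(\Gamma)}^2 = \iint_{A'\times A'}\!\phi(x)\phi(y)\Bigl(-\frac{\log|x-y|}{2\pi}\Bigr)ds(x)\,ds(y) = \|\phi\mathbf{1}_{A'}\|_{\SGz(\Gz)}^2,
\]
which converges to $\|\phi\|_{\SGz}^2>0$ as $r\to 0$ by continuity of $\SGz$ on $L^2(\Gz)$. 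Thus $\|\psi\|_{\SG}\ge\tfrac12\|\phi\|_{\SGz}$ for small $r$, and choosing $r,\rho$ sufficiently small (running the construction with $\epsilon/2$ for strict inequality) yields~(\ref{Sbound0}); the spectral alternative~(\ref{eq:resolventlowerbound}) follows from $\psi=(\KsG-\lambda)^{-1}(\KsG-\lambda)\psi$ whenever $\lambda\notin\sigma(\KsG)$. For part~(2) I take $\phi$ even (resp.\ odd) and build $\Gamma$ symmetrically about $L$ with $A'$ symmetric; then $\tilde\phi$ inherits the parity of $\phi$, and $\bar c$ (zero in the odd case) is an even constant, so $\psi$ lies in the even (odd) subspace of $\Hmhalfz(\Gamma)$, which is $\KsG$-invariant by~(\ref{eq:Kdecomposition}), and the same argument gives~(\ref{eq:resolventlowerboundeven}).

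The hardest step will be the lower bound $\|\psi\|_{\SG}\gtrsim\|\phi\|_{\SGz}$: without Lemma~\ref{lemma:L2}'s upgrade of $\phi$ from a distribution in $\Hmhalf(\Gz)$ to an $L^2(\Gz)$ function, the pointwise restriction $\phi\mathbf{1}_{A'}$ would have no meaning and the continuity statement $\|\phi\mathbf{1}_{A'}\|_{\SGz}\to\|\phi\|_{\SGz}$ would be inaccessible. It is the combination of $L^2$-regularity of eigenfunctions with the uniform Lipschitz control of $\SG$ from Lemma~\ref{lemma:Lipschitz} that lets the approximation be controlled uniformly across the admissible class of perturbations.
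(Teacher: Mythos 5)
Your proposal is correct and reaches the same conclusion, but it makes the zero-mean correction \emph{additively} (subtracting the mean constant $\bar c$) rather than \emph{multiplicatively} as the paper does. The paper instead chooses an auxiliary subarc $J\subset A'$, away from the perturbation, on which $|\phi|$ is bounded below (using the absolute continuity of $\phi$, a consequence of $\KsGz:L^2(\Gz)\to H^1(\Gz)$), and sets $\chi=1$ on $A'\setminus J$, $\chi=a$ on $J$, $\chi=0$ elsewhere, with $a\in(-2,2)$ tuned so that $\chi\phi\in L^2_0(\Gz)$. The payoff of the paper's construction is that $\chi\phi$ is supported entirely in $A'$, so all kernel estimates see only the $C^{2,\alpha}$ kernel of $\Gz$; your construction places the constant piece $-\bar c$ on all of $\Gamma$ including $D$, so you additionally need a uniform $L^2(\Gamma)\to L^2(\Gamma)$ bound on $\KsG$ across the admissible Lipschitz class to control $\|(\KsG-\lambda)(\bar c\mathbf{1})\|_{L^2}$ and the cross terms $(\SG\tilde\phi,\mathbf 1)$, $(\SG\mathbf 1,\mathbf 1)$ in $\|\psi\|_{\SG}^2$. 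That bound is standard (it depends only on the Lipschitz constant, which Condition~\ref{cond:Lipschitz} fixes), but it is an extra ingredient the paper's cutoff construction avoids; conversely, your route avoids the paper's argument that $\phi$ does not vanish on $J$. The remaining core of the two proofs is identical: Lemma~\ref{lemma:L2} to put $\phi\in L^2(\Gz)$, the split of the $L^2$ error over $A'$, $A\setminus A'$, and $D$ using the eigenvalue equation and the crude kernel bound $|K^*_\Gamma(x,y)|\le (2\pi|x-y|)^{-1}$, transfer to the $\SG$-norm by Lemma~\ref{lemma:Lipschitz}, and the lower bound $\|\psi\|_{\SG}\to\|\phi\|_{\SGz}>0$ via $L^2$-continuity of $\SGz$. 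One small point to make explicit when writing this up: since Lemma~\ref{lemma:Lipschitz} gives only $|(\SG f,f)|\le\CS^2\|f\|_{L^2}^2$ without sign information for general $f\in L^2(\Gamma)$, the lower bound on $\|\psi\|_{\SG}$ should be obtained by expanding $(\SG\psi,\psi)=(\SG\tilde\phi,\tilde\phi)-2\bar c\,(\SG\tilde\phi,\mathbf 1)+\bar c^2(\SG\mathbf 1,\mathbf 1)$ and showing the last two terms vanish as $\bar c\to 0$, rather than by treating $\|\tilde\phi\|_{\SG}$ and $\|\bar c\mathbf 1\|_{\SG}$ as norms and using a triangle inequality.
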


\begin{proof}
Let $\lambda \notin \left\{ 0, \frac{1}{2} \right\}$ be an eigenvalue of $\KsGz:\Hmhalf(\Gz)\to\Hmhalf(\Gz)$ with eigenfunction $\phi$,
\begin{equation}
(\KsGz - \lambda )\phi = 0\,.
\end{equation}
We may assume that $\phi$ is real-valued since the kernel of $\KsGz$ is real.
By Lemma~\ref{lemma:L2}, $\phi \in L^2(\Gz)$.  By Theorem~3.6 of~\cite{ColtonKress1998}, $\KsGz$ maps $L^2(\Gz)$ into $H^1(\Gz)$ because $\Gz$ is of class $C^{2,\alpha}$, thus $\phi$ is an absolutely continuous function (in the almost-everywhere sense).
Since $\phi$ is not in the one-dimensional eigenspace for the eigenvalue $1/2$ of $\KsGz$, it must lie in the $\SGz$-complement $\Hmhalfz(\Gz)$ of that eigenspace, that is, $\phi\in\Hmhalfz(\Gz)$.
Recall that $\langle \cdot,\,\cdot \rangle_{\SGz}$ is positive definite in $\phi\in\Hmhalfz(\Gz)$ and the corresponding norm is denoted by $\|\cdot\|_{\SGz}$.

Let $(U,\Delta_0,M)$ be a triple for $\Gz$ as in Condition~\ref{cond:Lipschitz}, and let $\CS$ be the constant provided by Lemma~\ref{lemma:Lipschitz}.
Let $\Gamma$ be a type \type perturbation of $\Gz$, with all notation from Definition~\ref{def:Tperturbation} pertaining to it, that satisfies the Lipschitz Condition~\ref{cond:Lipschitz} subject to $(U,\Delta_0,M)$.
By Lemma~\ref{lemma:Lipschitz},
\begin{equation}\label{uniformbound}
  \left\| \psi \right\|_{\SG}^2 \;=\; (\psi,\psi)_{\SG} \;:=\; (\SG\psi,\psi)_{L^2(\Gamma)} \,\leq\, \CS^2 (\psi,\psi)_{L^2(\Gamma)}
  \qquad \forall\psi\in L_0^2(\Gamma),
\end{equation}
in which $L_0^2(\Gamma)$ denotes the space of all $f\in L^2(\Gamma)$ such that $\int_\Gamma f ds=0$.
This uniform bound will not be used until inequality~(\ref{Sbound}).

Let $x_1 \in \Gz$ be a point other than $x_0$, such that $|\phi(x_1)|>\frac{3}{4}\max_{y\in\Gz}|\phi(y)|$. Let $J$ be a subarc of $\Gz$ containing $x_1$. There exists a number $d>0$, such that when $\meas(J)<d$, $\phi$ does not change sign on $J$, $|\phi(x)| > \frac{1}{2}\max_{y\in\Gz}|\phi(y)|$ for $x\in J$ and $J\subset A'$ when $\meas(\Gz \backslash A') < d$. 
For every choice of $t_2$ and $s_2$ such that $\meas(\Gz \backslash A') < d$, let $\meas(J) = \meas (\Gz \backslash A')$.  Then one can choose constant $a : -2<a<2$ in the function
\begin{equation}
\chi(x) = 
\begin{cases}
1,\quad x\in A'\backslash J\\
a, \quad x\in J\\
0, \quad \text{otherwise}
\end{cases}
\end{equation}
such that $\chi\phi \in L_0^2(\Gz)\subset \Hmhalfz(\Gz)$.  Since $\chi\phi$ is supported in $A'$, which is a subarc of both $\Gamma$ and $\Gz$, $\chi\phi$ can also be considered to lie in $\Hmhalfz(\Gamma)$.

Let $C_0$ be a bound for $\SGz$ in $L^2(\Gz)$,
\begin{equation}
  \left\| \SGz\psi \right\|_{L^2(\Gz)} \;\leq\; C_0 \left\| \psi \right\|_{L^2(\Gz)}
  \qquad \forall\psi\in L^2(\Gz).
\end{equation}
Thus
\begin{equation}
\begin{split}
  \left| \left( \chi\phi, \chi\phi \right)_{\SG} - \left( \phi, \phi \right)_{\SGz} \right|
  &\;=\; \left| \left( \chi\phi, \chi\phi \right)_{\SGz} - \left( \phi, \phi \right)_{\SGz} \right|
    \;=\; \left| \left( \chi\phi-\phi, \chi\phi \right)_{\SGz} + \left( \phi, \chi\phi-\phi \right)_{\SGz} \right| \\
  &\;\leq\; C_0 \left( \left\| \chi\phi \right\|_{L^2(\Gz)} + \left\| \phi \right\|_{L^2(\Gz)} \right) \left\| \chi\phi-\phi \right\|_{L^2(\Gz)} \\
  &\;\leq\; 3C_0\left\| \phi \right\|_{L^2(\Gz)} \left\| (1-\chi)\phi \right\|_{L^2(\Gz)}.
\end{split}
\end{equation}
As $t_2$ and $1-s_2$ tend to zero simultaneously, the measure of the support of $1-\chi$ on $\Gz$ tends to zero, and therefore $\left\| (1-\chi)\phi \right\|_{L^2(\Gz)}$ converges to zero.
Thus, $\left( \chi\phi, \chi\phi \right)_{\SG}$ converges to $\left( \phi, \phi \right)_{\SGz}$; equivalently, $\|\chi\phi\|_{\SG}$ converges to $\|\phi\|_{\SGz}$ as $t_2$ and $1-s_2$ tend to zero.
The number $\Cphi:=\|\phi\|_{\SGz}/2$\, is positive because $\SGz$ is a positive operator and $\phi$ is nonzero in $L_0^2(\Gz)$.  Therefore,
\begin{equation}\label{Cphi}
  \left\| \chi\phi \right\|_{\SG} \;>\; \Cphi 
\end{equation}
whenever $t_2$ and $1-s_2$ are sufficiently small.

We next seek to bound the $L^2$ norm $\|(\mathcal K_{\Gamma}^* - \lambda )(\chi \phi) \|_{L^2(\Gamma)}$ (see (\ref{mainbound}) below).
The domains $A$ and $D$ can be treated separately since
\begin{equation}
\|(\mathcal K_{\Gamma}^* - \lambda )(\chi \phi) \|_{L^2(\Gamma)} \,\leq\, \|(\mathcal K_{\Gamma}^* - \lambda )(\chi \phi) \|_{L^2(A)} + \|(\mathcal K_{\Gamma}^* - \lambda )(\chi \phi) \|_{L^2(D)}\,.
\end{equation}
For the set $A$, one uses the eigenvalue condition $(\KsGz - \lambda )\phi = 0$ and $\KsG(\chi \phi)|_A = \KsGz(\chi \phi)|_A $ to obtain
\begin{equation}\label{hello}
\begin{split}
[(\KsG - \lambda )(\chi \phi)]\big|_A 
&= [(\KsG - \lambda )(\chi \phi) - (\KsGz - \lambda )\phi ]\big|_A \\
&= [(\KsGz - \lambda )(\chi \phi) - (\KsGz - \lambda )\phi ]\big|_A \\
&= [\KsGz \left((\chi - 1) \phi\right) + \lambda (1 - \chi) \phi ]\big|_A\,.
\end{split}
\end{equation}
Denote the kernel of the adjoint Neumann-Poincar\'e operator by
\begin{equation}\label{kernel}
  \ksS(x,y) = \frac{1}{2\pi}\frac{x-y}{|x-y|^2}\cdot n_x
  \qquad
  (\Sigma = \Gz \text{ or } \Gamma).
\end{equation}
The first term in the last expression of (\ref{hello}) is bounded pointwise due to the pointwise bound
%
  $2\pi \ksGz(x,y)<C_{\Gz}$,  
%
which holds since $\Gz$ is of class $C^2$~\cite[Theorem~2.2]{ColtonKress1983},
\begin{equation}
\begin{split}
 2\pi |\KsGz((\chi-1)\phi)(x) |
 &= \left| \int_{\Gz} \ksGz(x,y) (\chi(y)-1) \phi(y)\, d\sigma(y) \right| \\
 &\leq 3\, C_{\Gz}  \int_{\Gz \!\setminus\! A' \cup J}   |\phi(y)|\, d\sigma(y) \\
 &\leq 3\, C_{\Gz} \| \phi \|_{L^2(\Gz)} \sqrt{\meas(\Gz \!\setminus\! A') + \meas(J)} \\
 &= 3\sqrt{2}\, C_{\Gz} \| \phi \|_{L^2(\Gz)} \sqrt{\meas(\Gz \!\setminus\! A')} \;,
 \qquad \forall\,x\in A,
\end{split}
\end{equation}
since $\meas(J) = \meas (\Gz \backslash A')$,
and the second term is bounded in norm by
\begin{equation}
 \| \lambda (1 - \chi) \phi\|_{L^2(A)} 
 \,\leq\, 3\, |\lambda| \left( \int_{\Gz\setminus A' \cup J} |\phi|^2 \right)^{1/2}
 \,\leq\, 3\, |\lambda|\, C(2\,\meas(\Gz\!\setminus\!A')),
\end{equation}
in which $C(\mu)>0$ is a number that decreases to zero as $\mu\to0$.
Together, these two bounds yield
\begin{equation}\label{eq:A}
\begin{split}
\|(\mathcal K_{\Gamma}^* - \lambda )(\chi \phi) \|_{L^2(A)} 
&\,\leq\, \|\mathcal K_{\Gz} ^* \left((\chi - 1) \phi\right)\|_{L^2(A)} + \| \lambda (1 - \chi) \phi\|_{L^2(A)}\\
&\,\leq\, \frac{ 3 C_{\Gz}}{\sqrt{2}\pi}  \| \phi \|_{L^2(\Gz)}  \sqrt{\meas(A)}\sqrt{\meas(\Gz\!\setminus\!A')} \,+\, 3\, |\lambda|\, C(2\,\meas(\Gz\!\setminus\!A')) \\
&\,\leq\, C'(\meas(\Gz\!\setminus\!A')) \,,
\end{split}
\end{equation}
in which $C'(\mu)>0$ is a number that decreases to zero as $\mu\to0$.

On the set $D$, $\chi\phi$ vanishes, so that
\begin{equation}\label{K-lambda}
(\mathcal K_{\Gamma}^* - \lambda )(\chi \phi) |_D = \mathcal K_{\Gamma}^* (\chi \phi) |_D.
\end{equation}
Since $\Gamma$ has a corner, the kernel of $\KsG$ does not enjoy a uniform pointwise bound, but \eqref{kernel} does provide
\begin{equation}\label{eq:bd}
|K_{\Gamma}^*(x,y)| \leq \frac{1}{2\pi} \frac{1}{|x-y|} \qquad \forall\, x, y\in \Gamma. 
\end{equation}
Using this and the inclusion $\supp(\chi)\subset A'$, one obtains a pointwise bound for $x\in D$,
\begin{equation}
\begin{split}
  \big| \KsG(\chi\phi)(x) \big|
  &= \left| \int_\Gamma \ksG(x,y)  \chi(y)\phi(y) d\sigma(y) \right|
  = 2 \left| \int_{A'} \ksGz(x,y) \phi(y) d\sigma(y) \right| \\
  &\leq\; \frac{1}{\pi\,\text{dist}(A', D)} \int_{A'} |\phi(y)| d\sigma(y) 
  \;\leq\; \frac{1}{\pi\,\text{dist}(A', D)} \| \phi \|_{L^2(\Gz)} \sqrt{\meas(\Gz)}
  \qquad \forall\,x\in D.
\end{split}
\end{equation}
This bound together with (\ref{K-lambda}) yields
\begin{align}\label{eq:D}
\|(\mathcal K_{\Gamma}^* - \lambda )(\chi \phi) \|_{L^2(D)}
\;\leq\; \frac{1}{\pi\,\text{dist}(A', D)} \| \phi \|_{L^2(\Gz)}  \sqrt{\meas(\Gz)\,\meas(D)}\,.
\end{align}
Combining \eqref{eq:A} and \eqref{eq:D} produces the bound
\begin{equation}\label{mainbound}
  \|(\mathcal K_{\Gamma}^* - \lambda )(\chi \phi) \|_{L^2(\Gamma)}
  \;\leq\; C'(\meas(\Gz\!\setminus\!A'))
  \,+ \left(\frac{ \|\phi\|_{L^2(\Gz)} \sqrt{\meas(\Gz)}}{\pi}  \frac{\sqrt{\meas(D)}}{\dist(A', D)}\right).
\end{equation}

Both of these bounding terms can be made arbitrarily small simultaneously.  Consider the first term:  $\Gz\!\setminus\!A'$ is the part of $\Gz$ about $x_0$ between $\Gz(t_2)$ and $\Gz(s_2)$.  Therefore, by taking $t_2$ and $1\!-\!s_2$ sufficiently small, $\meas(\Gz\!\setminus\!A')$ can be made arbitrarily small, and one obtains
\begin{equation}\label{len}
  C'(\meas(\Gz\!\setminus\!A')) \to 0
  \quad\text{as}\quad
  \max\left\{ t_2, 1\!-\!s_2 \right\} \to 0\,.
\end{equation}
Let $\epsilon>0$ be given arbitrarily.
The convergence~(\ref{len}) implies that there exists $r>0$ such that, if $0<t_2<r$ and $0<1-s_2<r$, then $C'(\meas(\Gz\!\setminus\!A'))<\epsilon\,\Cphi/(2\CS)$.
Assume that $r$ is small enough so that also (\ref{Cphi}) holds.
Then with $\rho=\epsilon\pi\Cphi/(2\CS\|\phi\|_{L^2(\Gz)} \sqrt{\meas(\Gz)}\,)$, the second term of (\ref{mainbound}) is less than $\epsilon\,\Cphi/(2\CS)$ whenever $\sqrt{\meas(D)}/\dist(A', D)<\rho$.  
These two bounds together yield
\begin{equation}
  \|(\KsG - \lambda )(\chi \phi) \|_{L^2(\Gamma)}
  \;\leq\; \frac{\Cphi}{\CS}\,\epsilon\,.
\end{equation}

Combining this bound with (\ref{uniformbound}) and (\ref{Cphi}) provides the desired bound
\begin{equation}\label{Sbound}
\|(\KsG - \lambda )(\chi \phi) \|_{\SG}
 \;\leq\; \CS \|(\mathcal K_{\Gamma}^* - \lambda )(\chi \phi) \|_{L^2(\Gamma)}
 \;\leq\; \epsilon\,\Cphi
 \;\leq\; \epsilon\,\|\chi\phi\|_{\SG}.
\end{equation}
If $\lambda \notin \sigma(\mathcal K_{\Gamma}^*)$ is a regular point of the operator $\mathcal K_{\Gamma}^*$,
this implies that 
\begin{equation}
\|(\KsG - \lambda )^{-1} \|_{\SG} > \epsilon^{-1},
\end{equation}
in which $\KsG$ is considered as an operator in $\Hmhalfz(\Gamma)$,
as claimed in the first part of the theorem.

These arguments also prove the second part of the theorem for a curve $\Gz$ that is symmetric about a line~$L$ if (1) the reference point $x_0$ is taken to be on $L$, (2) $J$ consists of two segments that are symmetric about $L$, (3) one takes $\chi$ to be even ($\Gz(s_2)$ is the reflection of $\Gz(t_0)$ about~$L$) so that if $\phi$ is even (or odd) $\chi\phi$ will also be even (or odd), and (4) the replacement curve $D$ is taken to be symmetric about~$L$.  Then in every occurrence of $\KsGz$ or $\KsG$ in the arguments, the operator is acting on an even (or odd) distribution, and thus may be replaced by $\KsGze$ or $\KsGe$ (or $\KsGzo$ or $\KsGo$).
\end{proof}

It is geometrically straightforward, even if somewhat technical analytically, to demonstrate that Lipschitz perturbations of type \type as required in Lemma~\ref{lemma:resolvent} are plentiful.  The following lemma will suffice.  Essentially, it says that one can always construct a perturbation $\Gamma$ with a desired corner angle $\theta$ for which the lower bound on the resolvent of $\KsG$ in Lemma~\ref{lemma:resolvent} holds.  To do this, one must find an appropriate Lipschitz constant $M$ for the given $\theta$ (sharper angles require larger $M$) and then construct a type \type perturbation that satisfies the requirements of Lemma~\ref{lemma:resolvent}.

\begin{lemma}\label{lemma:existence}
  Let a simple closed curve $\Gz$ of class $C^2$, and a number $\theta$ such that $0<\theta<\pi$ be given.  There exists a triple $(U,\Delta_0,M)$ for $\Gz$ as in Condition~\ref{cond:Lipschitz} such that, for all positive numbers $r$ and $\rho$, there exists a perturbation $\Gamma$ of $\Gz$ of type \type such that: $\Gamma$ satisfies the Lipschitz Condition~\ref{cond:Lipschitz} subject to $(U,\Delta_0,M)$;  conditions (\ref{condition1}) and~(\ref{condition2}) of Lemma~\ref{lemma:resolvent} are satisfied; and the half exterior angle of the corner of $\Gamma$ is equal to $\theta$.  If\, $\Gz$ is symmetric about a line~$L$, then $\Gamma$ can be taken to be symmetric about $L$ with the tip of the corner lying on $L$.
\end{lemma}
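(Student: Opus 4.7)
The plan is a direct geometric construction: fix the triple $(U,\Delta_0,M)$ once and for all in terms of $\Gz$ and $\theta$, and for each pair $(r,\rho)$ shrink the disk $\Delta$ of Definition~\ref{def:Tperturbation}(b) enough to enforce (\ref{condition1}) and (\ref{condition2}), then explicitly construct a replacement curve $D$ inside $\Delta$.

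First I would choose a reference point $x_0\in\Gz$, taking $x_0\in L\cap\Gz$ in the symmetric case; for a smooth simple closed curve symmetric about $L$, the tangent to $\Gz$ at such a point is necessarily perpendicular to $L$. Introduce rotated coordinates $(\xi,\eta)$ centered at $x_0$ with the $\xi$-axis tangent to $\Gz$ at $x_0$ and, when $L$ is present, the $\eta$-axis along $L$. Because $\Gz$ is $C^2$, there is an open ball $U$ about $x_0$ in which $U\cap\Gz$ is the graph $\eta=g(\xi)$ with $|g'|<1/4$; fix a closed disk $\Delta_0\subset U$ centered at $x_0$. When the prototype of Fig.~\ref{fig:disks} is oriented so that the common bisector of its two circles lies along the $\eta$-axis, the tangent slopes at the corner have magnitude $|\cot\theta|$; I then choose $M$ large enough (depending only on $\theta$) to dominate this slope plus the overshoot of the Hermite interpolant constructed below, e.g.\ $M=10|\cot\theta|+10$.

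Given $r,\rho>0$, pick $\delta>0$ small enough that $\Delta:=\{|x-x_0|\leq\delta\}\subset\Delta_0$ and that the parameter support of $B=\Delta\cap\Gz$ lies in $[0,r/3]\cup[1-r/3,1]$; then taking $t_2=r/2$ and $s_2=1-r/2$ satisfies the ordering $0<t_1<t_2<s_2<s_1<1$ and condition (\ref{condition1}). Since $A'=\Gz[(t_2,s_2)]$ is a fixed subarc of $\Gz\setminus\Delta_0$, the number $d_0:=\dist(A',\Delta_0)>0$ depends only on $r$, and $\dist(A',D)\geq d_0$ for every $D\subset\Delta\subset\Delta_0$. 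The construction below will produce $D$ with $\meas(D)\leq C_\theta\,\delta$ for a constant $C_\theta$ depending only on $\theta$ and $g$, so shrinking $\delta$ further to ensure $C_\theta\,\delta<\rho^2 d_0^2$ secures condition (\ref{condition2}).

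To build $D$, place the corner tip $x_0'$ on the $\eta$-axis (on $L$ in the symmetric case) inside $\Delta$, let $\Delta'=\{|x-x_0'|\leq\delta'\}$ with $\delta'\ll\delta$, and define $D$ in $\Delta'$ as the translation of the prototype corner with bisector along the $\eta$-axis, so that the half exterior angle equals $\theta$ exactly. Between $\partial\Delta'$ and the endpoints $\Gz(t_1),\Gz(s_1)\in\partial\Delta$, let $D$ be the graph of a $C^{2,\alpha}$ function $\eta=h(\xi)$ matching the prescribed position, first, and second derivatives at both ends, obtained by standard Hermite interpolation; in the symmetric case carry out this construction only on $\xi>0$ and reflect across $L$. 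The main obstacle is to verify $|h'|<M$ uniformly in $\delta$. This follows from a scaling argument: the interpolation interval has length of order $\delta$, endpoint heights of order $\delta$, and endpoint slopes bounded by $|\cot\theta|$ (with the $\Gz$-side slopes even of order $\delta$ since $g'(0)=0$), so the rescaled Hermite interpolant on the unit interval has slope $O(\delta)$ and, rescaling back, $|h'|$ is bounded by a constant multiple of $|\cot\theta|$ independent of $\delta$, hence less than $M$. Uniform $C^2$ bounds on $h$ are not required---the curvature may blow up as $\delta\to0$, consistent with $D$ being $C^{2,\alpha}$ only away from $x_0'$. With this uniform Lipschitz bound in hand, $U\cap\Gamma$ is the graph of a function with Lipschitz constant less than $M$, giving the required Lipschitz control on $\Gamma$.
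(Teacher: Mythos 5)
Your overall plan---fix $(U,\Delta_0,M)$ depending only on $\Gz$ and $\theta$, then for each $(r,\rho)$ shrink the perturbation disk $\Delta$ and splice the prototype corner into $\Gz$ inside $\Delta$---is the same strategy the paper uses. But two of your key estimates contain errors that break the argument as written.

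First, the claim that ``$A'=\Gz[(t_2,s_2)]$ is a fixed subarc of $\Gz\setminus\Delta_0$'' and hence $d_0:=\dist(A',\Delta_0)>0$ fails when $r$ is small. The triple $(U,\Delta_0,M)$ is fixed \emph{before} $r$ and $\rho$ are given, so $\Delta_0$ has a fixed radius, yet the endpoints $\Gz(t_2)=\Gz(r/2)$ and $\Gz(s_2)=\Gz(1-r/2)$ of $A'$ approach $x_0\in\Delta_0$ as $r\to 0$ and will enter $\Delta_0$; at that point $d_0=0$ and the deduction ``$\dist(A',D)\geq d_0$'' gives nothing, so you cannot secure condition~(\ref{condition2}). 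The correct quantity to bound, and what the paper's proof bounds, is $\dist(A',\Delta)$, which tends to $\dist(A',x_0)>0$ as $\mathrm{rad}(\Delta)\to 0$ because $x_0\notin A'$; thus $\dist(A',D)\geq\dist(A',x_0)-\delta$ stays bounded below once $\delta$ is small.

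Second, the scaling argument for the uniform Lipschitz bound is internally inconsistent. You write that the rescaled Hermite interpolant on the unit interval has slope $O(\delta)$, but slopes are invariant under the rescaling $\xi\mapsto(\xi-a)/\ell$, $h\mapsto h/\ell$, and your endpoint slope on the $\Delta'$-side is $\approx|\cot\theta|$, not $O(\delta)$. After rescaling the positions are $O(1)$ (not $O(\delta)$), the endpoint slopes are $O(|\cot\theta|)$ and $O(\delta)$, the second derivatives are $O(\delta)$; the interpolant's maximum slope is therefore bounded by a universal multiple of $|\cot\theta|+1$, which is the conclusion you want, but it does not follow from the premise you state, and the specific choice $M=10|\cot\theta|+10$ is asserted without quantifying the quintic Hermite overshoot. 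The paper avoids this bookkeeping entirely with a transversality device: the prototype corner $\tilde g$ is taken with $M_1<|\tilde g'|<M_2$, the coordinates and $U$ are chosen so that $|f'|<\min\{1,M_1\}$, and the vertical translate of $\tilde g$ then crosses the graph $\eta=f(\xi)$ transversally at exactly two points; the replacement curve is $\tilde g$ between the crossings, rounded in tiny intervals at $\xi_1,\xi_2$ where the one-sided slopes already lie below $M_2$, so the Lipschitz constant $M_2$ is enforced by construction with no overshoot to control. A further detail you do not verify but would need to is that the graph of your interpolant stays inside $\Delta$, as required by Definition~\ref{def:Tperturbation}(d); the paper's choice of vertical offset $\eta_0$ handles this explicitly.
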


\begin{proof}
Given $\theta\in(0,\pi)$, let $g(\xi)$, for $\xi$ in some interval, be a function whose graph describes a rotated corner of a type \type perturbation as described in part (d) of Definition~\ref{def:Tperturbation} (a neighborhood of a corner of the intersection of two circles as in Fig.~\ref{fig:disks}) such that the tip occurs at $\xi=0$ and points upward for $\theta>\pi/2$ and downward for $\theta<\pi/2$; and let $M_1$ and $M_2$ be positive numbers such that $M_1<|g'(\xi)|<M_2$ for $\xi\not=0$.

Let a simple closed curve $\Gz$ of class $C^2$ be parameterized such that $\Gz(0)=\Gz(1)=x_0$.  Choose an open set $U\subset\RR^2$ and rotated and translated coordinates $(\xi,\eta)$ for $\RR^2$ such that $x_0\in U$ and $\Gz\cap U$ is the graph $\eta=f(\xi)$ of a $C^2$ function $f$, with $x_0=(0,f(0))$ and $|f'(\xi)|<\min\{1,M_1\}$, and such that the part of $U$ that lies below the graph is in the interior domain of $\Gz$.
Choose a closed disk $\Delta_0\subset U$ centered at $x_0$.
Each closed circle centered at $x_0$ contained in $\Delta_0$ intersects $\Gz$ at exactly two points.  There are no more than two intersection points because $|f'(\xi)|<1$.

Let $\Delta$ be any closed disk centered at $x_0$ and contained in $\Delta_0$.
Define $\tilde g(\xi)=g(\xi)+\eta_0$ with $\eta_0$ chosen such that the graph $\eta=\tilde g(\xi)$ intersects $\Gz$ in exactly two points in the interior of $\Delta$---call them $x_1=(\xi_1,f(\xi_1))$ and $x_2=(\xi_2,f(\xi_2))$---and such that the graph of $\tilde g$ between these two points lies in the interior of $\Delta$.  This is possible because $|\tilde g'(\xi)|>M_1$ and $|f'(\xi)|<M_1$.

Set $\tilde f(\xi)=f(\xi)$ for $\xi\not\in[\xi_1,\xi_2]$ and $\tilde f(\xi)=\tilde g(\xi)$ for $\xi\in[\xi_1,\xi_2]$, and observe that the tip of the corner occurs at the point $(0,\tilde f(0))$.  Then let $\tilde{\tilde f}(\xi)$ be a function that is of class $C^2$ except at $\xi=0$ and that is equal to $\tilde f(\xi)$ except in two nonintersecting intervals, one about $\xi_1$ and one about $\xi_2$; these intervals can be taken small enough so that the graphs of $\tilde{\tilde f}$ and $f$ coincide outside of $\Delta$.  The smoothing $\tilde{\tilde f}$ can also be arranged so that $\big|\tilde{\tilde f}'(\xi)\big|<M_2$; this is because $|\tilde f'(\xi)|<M_2$ except at $\xi_1$, $0$, and $\xi_2$, where $\tilde f$ is continuous but not differentiable.  It follows that the length of the graph of $\tilde{\tilde f}$ inside $\Delta$, which is called $D$ in part (d) of Definition~\ref{def:Tperturbation}, is bounded by 
%
\begin{equation}
  \meas(D) \leq 2\sqrt{1+M_2^2\,}\,\mathrm{rad}(\Delta).
\end{equation}
The curve $\Gamma$ resulting from replacing the segment of $\Gz$ described by $\eta=f(\xi)$ by the curve $\eta=\tilde{\tilde f}(\xi)$ is a type~\type perturbation of $\Gz$ that satisfies the Lipschitz Condition~\ref{cond:Lipschitz} subject to the triple $(U,\Delta_0,M_2)$, and its corner has half exterior angle equal to $\theta$.

Let $r>0$ and $\rho>0$ be given.  Choose numbers $t_2$ and $s_2$ in Definition~\ref{def:Tperturbation} so that condition~(\ref{condition1}) is satisfied, that is, $0<t_2<r$ and $0<1-s_2<r$.
For these fixed values of $t_2$ and $s_2$,
\begin{equation}
  \frac{\sqrt{\meas(D)}}{\dist(A',D)} \leq \frac{\sqrt{2\sqrt{1+M_2^2\,}\,\mathrm{rad}(\Delta)}}{\dist(A',\Delta)} \to 0
  \qquad
  \text{as } \; \mathrm{rad}(\Delta)\to0.
\end{equation}
Therefore, $\mathrm{rad}(\Delta)$ can be taken to be small enough in this construction of $\Gamma$ so that
\begin{equation}
  \frac{\sqrt{\meas(D)}}{\dist(A',D)} < \rho,
\end{equation}
which is condition~(\ref{condition2}).
In the symmetric case, $x_0\in L$ and $t_2$ and $s_2$ can be chosen such that $\Gz(s_2)$ is the reflection of $\Gz(t_0)$ about $L$, and $D$ can be arranged to be symmetric about~$L$.
\end{proof}

\section{Reflection symmetry and essential spectrum}\label{sec:essential}

For all of the curves in this section, assume that $\beta$ in (\ref{eq:single}) is chosen such that $\mathcal{S}$ is a positive operator for all the curves under consideration.
Consider a curve $\Gz$ of class $C^2$ and perturbations $\Gamma$ of type \type that are symmetric with respect to a line $L$.  Recall that, in this case, the operators $\KsGz$ and $\KsG$ admit decompositions onto the even and odd distributional spaces, as stated in~(\ref{eq:Kdecomposition}),
\begin{equation}
  \KsGz = \KsGze\oplus\KsGzo\,,
  \quad
  \KsG = \KsGe\oplus\KsGo\,.
\end{equation}
The prototypical curvilinear polygons $\partial\Omega$ described in section~\ref{sec:perturbation} (Fig.~\ref{fig:disks}) are themselves symmetric about a line through the two corner points.  The spectral resolution of the Neumann-Poincar\'e operator on $\partial \Omega$ is explicitly computed in~\cite{KangLimYu2017} through conformal mapping and Fourier transformation.
Recall that $\theta$ is half the angle of the corner measured in the exterior of the curve.  It is shown that
\begin{equation}
\sigma_\acont(\KsO) = [-b,b],  \quad \sigma_\scont(\KsO) = \emptyset, \quad\sigma_\pp(\KsO) = \{1/2\},
\end{equation}
where $b = |\frac{1}{2} - \frac{\theta}{\pi}|$ depends on the angle, $\sigma_\acont$ refers to absolutely continuous spectrum, $\sigma_\scont$ refers to singular continuous spectrum, and $\sigma_\pp$ refers to pure point spectrum.   Therefore, $\sigma_\acont(\KsO)=\sigma_\ess(\KsO)$.

Furthermore, it is shown in~\cite{KangLimYu2017} that the essential spectra of the even and odd components of $\KsO$ intersect only in $\{0\}$,
 \begin{equation}
\sigma_\ess(\mathcal K_{\partial \Omega,o}^*) = [-b, 0],  \quad \sigma_\ess(\mathcal K_{\partial \Omega,e}^*) = [0, b] \quad \text{for} \quad \pi/2<\theta<\pi
\end{equation}
for outward-pointing corners and
 \begin{equation}
\sigma_\ess(\mathcal K_{\partial \Omega,o}^*) = [0,b],  \quad \sigma_\ess(\mathcal K_{\partial \Omega,e}^*) = [-b,0] \quad \text{for} \quad 0<\theta < \pi/2
\end{equation}
for inward-pointing corners.
Our proof of eigenvalues in the essential spectrum requires that this disjointness persist for the perturbation $\Gamma$, and this is the content of the following proposition.

The proof of Proposition~\ref{prop:evenodd} invokes the local nature of the essential spectrum of $\KsG$.  This is bridged by its essential spectrum $\sigma_{\text{ea}}(\KsG)$ in the approximate eigenvalue sense~\cite{PerfektPutinar2017}.  For an operator $T: H \rightarrow H$, $\lambda \in \sigma_{\text{ea}}(T)$ if and only if there is a bounded sequence $\{f_n\}_{n=1}^{\infty} \in H$ having no convergent subsequence, such that $(T - \lambda)f_n \rightarrow 0$ in $H$.  One calls $\{f_n\}_{n=1}^{\infty}$ a singular sequence.  When $T$ is self adjoint, $ \sigma_\ess (T) = \sigma_{\text{ea}}(T)$. If an operator $S: H \rightarrow H$ is such that $S-T$ is compact, then $ \sigma_{\text{ea}} (S) = \sigma_{\text{ea}}(T)$.

\begin{proposition}\label{prop:evenodd}
The essential spectra of the even and odd components of $\KsG$ for a reflectionally symmetric perturbation curve $\Gamma$ of type~\type coincides with the essential spectra of the even and odd components of $\KsO$ for the prototypical curvilinear polygon $\partial\Omega$ (Fig.~\ref{fig:disks}) having corners with the same exterior angle as $\Gamma$,
\begin{eqnarray}
    && \sigma_\ess(\KsGe) \;=\; \sigma_\ess(\KsOe), \\
    && \sigma_\ess(\KsGo) \;=\; \sigma_\ess(\KsOo).
\end{eqnarray}
\end{proposition}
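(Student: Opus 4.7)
The plan is to prove $\sigma_\ess(\KsGe)=\sigma_\ess(\KsOe)$; the odd case follows from the identical argument applied to odd singular sequences. Both operators are self-adjoint on their respective energy spaces, so $\sigma_\ess=\sigma_{\text{ea}}$ for each, and it suffices to transfer singular sequences between the two. Two ingredients drive the proof: essential spectrum is local, in the sense that singular sequences can be concentrated in arbitrarily small neighborhoods of the corners; and by Definition~\ref{def:Tperturbation}(d), a neighborhood of the corner $x'_0\in\Gamma$ coincides, up to a translation-rotation $\tau$, with a neighborhood of a corner of $\partial\Omega$. Because the reflection axis of $\Gamma$ passes through $x'_0$ and that of $\partial\Omega$ passes through its corners, smooth cutoffs centered at the corners may be chosen even, and hence preserve the even/odd parity of any function against which they are multiplied; in this way the localization respects the decomposition~(\ref{eq:Kdecomposition}).

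For the inclusion $\sigma_\ess(\KsOe)\subseteq\sigma_\ess(\KsGe)$, I would take a singular sequence $\{\phi_n\}\subset\Hmhalfe(\partial\Omega)$ for $\KsOe-\lambda$. Let $\chi_1,\chi_2$ be even smooth cutoffs supported in small disjoint disks centered at the two corners $p_1,p_2$ of $\partial\Omega$, and set $\chi_0=1-\chi_1-\chi_2$, supported where $\partial\Omega$ is $C^2$. On such regions $\KsO$ is smoothing (Theorem~3.6 of~\cite{ColtonKress1998}), so $\chi_0\phi_n$ admits a convergent subsequence; thus at least one of $\chi_1\phi_n$ and $\chi_2\phi_n$ retains the non-compactness, and by the symmetry of $\partial\Omega$ one may take it to be $\chi_1\phi_n$. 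Compactness of the commutator $[\KsOe,\chi_1]$---whose integral kernel is smooth away from $p_1$---then shows that $\chi_1\phi_n$ is itself a singular sequence for $\KsOe-\lambda$, still even and localized near $p_1$.

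Setting $\tilde\phi_n=\tau_*(\chi_1\phi_n)$ transports the sequence to an even sequence on $\Gamma$ supported near $x'_0$. By the equivalence of the $\mathcal S$ and $\Hmhalf$ norms~\cite{PerfektPutinar2014} combined with the local Sobolev identification under $\tau$, the sequence $\tilde\phi_n$ remains bounded and admits no convergent subsequence. The decisive computation is
\begin{equation*}
 (\KsGe-\lambda)\tilde\phi_n \;=\; \tau_*\bigl((\KsOe-\lambda)(\chi_1\phi_n)\bigr) \,+\, \bigl(\KsGe\tilde\phi_n-\tau_*\KsOe(\chi_1\phi_n)\bigr),
\end{equation*}
whose first term tends to zero by construction. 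In the second term, the integrations over the common neighborhoods of the corners cancel under $\tau$, and only contributions coming from the disjoint far portions of $\Gamma$ and $\partial\Omega$ survive; these far portions lie at positive distance from the supports of the inputs, so their kernels are smooth and the resulting operators are compact, and upon passing to a weakly null subsequence their values go to zero in norm. The reverse inclusion is analogous and simpler because $\Gamma$ has only one corner, and the odd case follows by replacing even subspaces with odd ones throughout. The principal technical obstacle is the rigorous verification that $[\KsG,\chi]$ and the far-field difference of the two operators under the identification $\tau$ are compact in the $\mathcal S$ norm, which reduces to smoothness of the corresponding off-diagonal kernels combined with the norm equivalence of~\cite{PerfektPutinar2014}.
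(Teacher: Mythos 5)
Your proof is correct and follows essentially the same route as the paper: localize the essential spectrum near the corners using even cutoffs (so the parity decomposition is respected), use the translation–rotation identity between a corner neighborhood of $\Gamma$ and one of $\partial\Omega$, and dispose of the far‑field and cross terms by compactness. The only presentational difference is that you unroll the argument into an explicit transfer of singular sequences, whereas the paper compresses the localization step by citing the approximate‑eigenvalue decomposition $\sigma_\ess(\KS)=\bigcup_j\sigma_{\text{ea}}(M_{\rho_j}\KS M_{\rho_j})$ from Perfekt--Putinar; the compactness facts you flag as the ``principal technical obstacle'' are exactly the ones the paper also leaves to that reference.
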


\begin{proof}
This proof essentially follows~\cite{PerfektPutinar2017}.
Let $\Sigma$ be a simple closed Lipschitz curve that is piecewise of class $C^2$ and has $n$ corners.
Let $\{\rho_j\}_{j=1}^n$ be cutoff functions on $\Sigma$ that have mutually disjoint supports and such that $\rho_j$ is equal to $1$ in a neighborhood of the $j$-th corner and is of class $C^2$ otherwise, and set $\rho_0 = 1-\sum_{j=1}^n \rho_j$.
Denote by $M_{\rho}$ the operator of multiplication by $\rho$.  In the decomposition
\begin{equation}
  \KS \;=\;  \sum\limits_{0\leq i,j \leq n} M_{\rho_i} \KS M_{\rho_j},
\end{equation}
each term is compact unless $i=j\not=0$.  This implies the second equality in
\begin{equation}\label{ess}
  \sigma_\ess(\KS) \;=  \sigma_{\text{ea}}(\KS) = \; \sigma_{\text{ea}} \left(\textstyle\sum\limits_{j=1}^n  M_{\rho_j} \KS M_{\rho_j} \right)
  = \; \textstyle\bigcup\limits_{j=1}^n \sigma_{\text{ea}} \left( M_{\rho_j} \KS M_{\rho_j} \right),
\end{equation}
where the first equality follows from the self-adjointness of $\KS: \Hhalf(\Sigma)\to\Hhalf(\Sigma)$ with respect to the $\SG^{-1}$ inner product, and the last equality is proved in \cite[Lemma 9]{PerfektPutinar2017}.

Now suppose that $\Sigma$ is reflectionally symmetric about a line $L$ and that $\Sigma$ has either one or two corners (so that $n=1$ or $n=2$) with vertex on $L$ and that the cutoff functions $\rho_j$ are chosen to be even so that the operators $M_{\rho_j}$ commute with the reflection.  Because of this, one has orthogonal decompositions
\begin{equation}
  M_{\rho_i} \KS M_{\rho_j} \;=\;
  M_{\rho_i} \KSe M_{\rho_j} \,\oplus\, M_{\rho_i} \KSo M_{\rho_j},
\end{equation}
and therefore the compactness of $M_{\rho_i} \KS M_{\rho_j}$ (unless $i=j\not=0$) implies the compactness of the even and odd components on the right-hand side.  Using this with the decomposition
\begin{equation}
  \KSe \;=\; \sum\limits_{0\leq i,j \leq n} M_{\rho_i} \KSe M_{\rho_j}
\end{equation}
and the analogous decomposition of $\KSo$ yields
\begin{eqnarray}
  && \sigma_\ess(\KSe) \;=\; \textstyle\bigcup\limits_{j=1}^n \sigma_{\text{ea}} \left( M_{\rho_j} \KSe M_{\rho_j} \right), \\
  && \sigma_\ess(\KSo) \;=\; \textstyle\bigcup\limits_{j=1}^n \sigma_{\text{ea}}\left( M_{\rho_j} \KSo M_{\rho_j} \right).\end{eqnarray}

Apply this result to $\partial\Omega$, which has two corners ($n=2$), and to the type \type perturbation $\Gamma$ of $\Gz$, which has only one corner ($n=1$), and use $\tilde\rho_1$ for $\Gamma$ to distinguish it from $\rho_1$ for $\partial\Omega$,
\begin{equation}
\begin{aligned}
  \sigma_\ess(\KOe) &\;=\; \sigma_{\text{ea}} \left( M_{\rho_1} \KOe M_{\rho_1} \right) 
                   \cup \sigma_{\text{ea}} \left( M_{\rho_2} \KOe M_{\rho_2} \right), \\
  \sigma_\ess(\KGe) &\;=\; \sigma_{\text{ea}} \left( M_{\tilde\rho_1} \KGe M_{\tilde\rho_1} \right).
\end{aligned}
\end{equation}
Since a neighborhood of the corner of $\Gamma$ coincides after translation and rotation with a neighborhood of either corner of $\partial\Omega$, and since $\partial\Omega$ has symmetry about a vertical line (Fig.~\ref{fig:disks}), the function $\rho_1+\rho_2$ can be chosen to be symmetric with respect to both reflections.  Furthermore, $\tilde\rho_1$ and $\rho_1$ can be chosen so that $\supp\,\tilde\rho_1 \cap \Gamma$ and $\supp\,\rho_1 \cap \partial\Omega$ as well as the functions $\tilde\rho_1$ and $\rho_1$ on their supports coincide after translation and rotation.
Under these conditions, $M_{\rho_1} \KOe M_{\rho_1}$, and $M_{\rho_2} \KOe M_{\rho_2}$ are unitarily similar operators, thus
\begin{eqnarray}
  \sigma_{\text{ea}} \left( M_{\rho_1} \KOe M_{\rho_1} \right) = \sigma_{\text{ea}}\left( M_{\rho_2} \KOe M_{\rho_2} \right) =  \sigma_\ess(\KOe).
\end{eqnarray}
Since $\sigma_{\text{ea}} \left( M_{\rho_j} \KSe M_{\rho_j} \right)$ is characterized by functions localized at the $j$-th corner, we obtain
\begin{eqnarray}
   \sigma_{\text{ea}} \left( M_{\tilde\rho_1} \KGe M_{\tilde\rho_1} \right) = \sigma_{\text{ea}}\left( M_{\rho_1} \KOe M_{\rho_1} \right).
\end{eqnarray}
Therefore
\begin{eqnarray}
    && \sigma_\ess(\KGe) \;=\; \sigma_\ess(\KOe), \\
    && \sigma_\ess(\KGo) \;=\; \sigma_\ess(\KOo),
\end{eqnarray}
and the equation for the odd component is obtained in the same manner.

The proposition now follows from $\sigma_\ess(\KsGe) = \sigma_\ess(\KGe)$ and 
$\sigma_\ess(\KsOe) = \sigma_\ess(\KOe)$ and the analogous equalities for the odd components of these operators, where $\KsG$ and $\KsO$ are considered on $\Hmhalf(\Gamma)$ and $\Hmhalf(\partial\Omega)$.
\end{proof}

Equation (\ref{ess}) expresses the local manner in which the corners of a curvilinear polygon contribute to the essential spectrum of the Neumann-Poincar\'e operator.  How this happens for an individual corner is enlightened through explicit construction of Weyl sequences associated to each $\lambda\in\sigma_\ess(\KsG)$, which is carried out by Bonnetier and Zhang~\cite{BonnetierZhang2017}.

\section{Eigenvalues in the essential spectrum}\label{sec:embedded}

The strategy to construct eigenvalues in the essential spectrum for the Neumann-Poincar\'e operator is to obtain a spectral-vicinity result of the form
\begin{equation}\label{eq:dist}
  \dist(\lambda,\sigma(\KsGe)) < \epsilon\,,
\end{equation}
in which $\lambda$ is an eigenvalue of $\KsGze$ and $\Gamma$ is a type \type perturbation of $\Gz$, by applying Lemma~\ref{lemma:resolvent}.  The angle of the corner of $\Gamma$ is chosen so that $\lambda$ does not lie within the essential spectrum of $\KsGe$ but does lie inside the essential spectrum of $\KsGo$.  This will guarantee that $\KsGe$ has an eigenvalue near $\lambda$ and that this eigenvalue lies in the essential spectrum of $\KsGo$.  An analogous procedure applies to eigenvalues of $\KsGzo$.  In fact, $\Gamma$ can be chosen so that several eigenvalues of $\KsGz$ are perturbed into eigenvalues of $\KsG$ that lie within the essential spectrum.  Our proof is only able to guarantee a finite number of eigenvalues in the essential spectrum for a given perturbation $\Gamma$.
This is because the perturbation $\Gamma$ depends on the eigenfunction and on $\epsilon$ (smaller $\epsilon$ requires a corner of smaller arclength), and no uniform $\epsilon$ can be chosen to guarantee infinitely many distinct perturbed eigenvalues of the same sign.

\begin{theorem}\label{thm:main}
Let $\Gz$ be a simple closed curve of class $C^{2,\alpha}$ in $\RR^2$ that is symmetric about a line $L$. 

\smallskip
\noindent
(a)  Suppose that the adjoint Neumann-Poincar\'e operator $\KsGz$ has $m$ even eigenfunctions corresponding to eigenvalues $\lambda^e_j$ and $n$ odd eigenfunctions corresponding to eigenvalues $\lambda^o_j$ such that
\begin{equation}
  \lambda^e_m<\dots<\lambda^e_1<0<\lambda^o_1<\dots<\lambda^o_n\,.
\end{equation}
There exists a Lipschitz-continuous perturbation $\Gamma$ of $\Gz$ with the following properties:  $\Gamma$ is symmetric about~$L$; $\Gamma$ possesses an outward-pointing corner and is otherwise of class $C^{2,\alpha}$; the associated operator $\KsG$ has $m$ even eigenfunctions corresponding to eigenvalues $\tilde\lambda^e_j$ and $n$ odd eigenfunctions corresponding to eigenvalues $\tilde\lambda^o_j$ such that
\begin{equation}\label{perturbedevals1}
  \tilde\lambda^e_m<\dots<\tilde\lambda^e_1<0<\tilde\lambda^o_1<\dots<\tilde\lambda^o_n\,;
\end{equation}
these eigenvalues lie within the essential spectrum of $\KsG$.

\smallskip
\noindent
(b)  Suppose that the adjoint Neumann-Poincar\'e operator $\KsGz$ has $m$ odd eigenfunctions corresponding to eigenvalues $\lambda^o_j$ and $n$ even eigenfunctions corresponding to eigenvalues $\lambda^e_j<1/2$ such that
\begin{equation}
  \lambda^o_m<\dots<\lambda^o_1<0<\lambda^e_1<\dots<\lambda^e_n\,.
\end{equation}
There exists a Lipschitz-continuous perturbation $\Gamma$ of $\Gz$ with the following properties:  $\Gamma$ is symmetric about~$L$; $\Gamma$ possesses an inward-pointing corner and is otherwise of class $C^{2,\alpha}$; the associated operator $\KsG$ has $m$ odd eigenfunctions corresponding to eigenvalues $\tilde\lambda^o_j$ and $n$ even eigenfunctions corresponding to eigenvalues $\tilde\lambda^e_j$ such that
\begin{equation}\label{perturbedevals2}
  \tilde\lambda^o_m<\dots<\tilde\lambda^o_1<0<\tilde\lambda^e_1<\dots<\tilde\lambda^e_n\,;
\end{equation}
these eigenvalues lie within the essential spectrum of $\KsG$.
\end{theorem}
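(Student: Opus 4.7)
The plan is to combine Lemma~\ref{lemma:resolvent}, Lemma~\ref{lemma:existence}, and Proposition~\ref{prop:evenodd} to build a single $L$-symmetric type~\type perturbation $\Gamma$ whose outward-pointing corner has half exterior angle $\theta$ chosen so that each unperturbed eigenvalue $\lambda_j^\sharp$ of $\KsGz$ falls strictly inside the essential spectrum of the opposite-parity component of $\KsG$ and strictly outside the essential spectrum of its own-parity component, and whose arclength is so small that Lemma~\ref{lemma:resolvent} converts each $\lambda_j^\sharp$ into a nearby genuine eigenvalue of the same-parity component. I describe part~(a); part~(b) follows by interchanging ``outward'' with ``inward'' and the two parities.

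First, since $\KsGz$ is compact on $L^2(\Gz)$ and $\tfrac12$ is an isolated eigenvalue, every $\lambda^e_j$ and $\lambda^o_j$ lies strictly inside $(-\tfrac12,\tfrac12)$. I pick $\theta\in(\pi/2,\pi)$ so that
\begin{equation*}
  b \;:=\; \theta/\pi - 1/2 \;>\; \max\{\lambda^o_n,\,|\lambda^e_m|\},
\end{equation*}
placing each $\lambda^o_j$ in $(0,b)$ and each $\lambda^e_j$ in $(-b,0)$. By Proposition~\ref{prop:evenodd} and the intervals recorded in Section~\ref{sec:essential}, any symmetric type~\type perturbation with this angle satisfies $\sigma_\ess(\KsGe)=[0,b]$ and $\sigma_\ess(\KsGo)=[-b,0]$. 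Next, I fix $\epsilon_0>0$ smaller than one third of the minimum gap between consecutive $\lambda_j^\sharp$ and smaller than the distance from each $\lambda_j^\sharp$ to the endpoints of its enclosing interval, so that the $\epsilon_0$-balls around the $\lambda^o_j$ are pairwise disjoint subsets of $(0,b)$ and the $\epsilon_0$-balls around the $\lambda^e_j$ are pairwise disjoint subsets of $(-b,0)$.

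Second, I apply Lemma~\ref{lemma:existence} with this $\theta$ to fix a triple $(U,\Delta_0,M)$ for $\Gz$ with $\Delta_0$ containing a point of $L\cap\Gz$; then I apply Lemma~\ref{lemma:resolvent}(2) with $\epsilon=\epsilon_0$ to each $\lambda_j^\sharp$ in turn, obtaining numbers $r_j^\sharp,\rho_j^\sharp>0$. Setting $r=\min_{\sharp,j}r_j^\sharp$ and $\rho=\min_{\sharp,j}\rho_j^\sharp$ and invoking Lemma~\ref{lemma:existence} once more produces a single $L$-symmetric type~\type perturbation $\Gamma$ with corner tip on $L$, half exterior angle $\theta$, and satisfying the Lipschitz and geometric conditions of Lemma~\ref{lemma:resolvent}(2) simultaneously for every $\lambda_j^\sharp$.

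Finally, for each positive odd eigenvalue, Lemma~\ref{lemma:resolvent}(2) gives either $\lambda^o_j\in\sigma(\KsGo)$ or $\|(\KsGo-\lambda^o_j)^{-1}\|_{\SG}>\epsilon_0^{-1}$; by self-adjointness of $\KsGo$ on the odd part of $\Hmhalfz(\Gamma)$ with respect to the positive $\mathcal{S}$-inner product, this means $\dist(\lambda^o_j,\sigma(\KsGo))<\epsilon_0$, so some $\tilde\lambda^o_j\in\sigma(\KsGo)$ satisfies $|\tilde\lambda^o_j-\lambda^o_j|<\epsilon_0$. By the choice of $\epsilon_0$, $\tilde\lambda^o_j\in(0,b)$, which is disjoint from $\sigma_\ess(\KsGo)=[-b,0]$, so $\tilde\lambda^o_j$ is genuinely an eigenvalue of $\KsGo$; simultaneously $\tilde\lambda^o_j\in(0,b)\subset\sigma_\ess(\KsGe)\subset\sigma_\ess(\KsG)$. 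The symmetric argument on each $\lambda^e_j$ produces $\tilde\lambda^e_j\in(-b,0)$ with odd embedding in $\sigma_\ess(\KsG)$, and disjointness of the $\epsilon_0$-balls yields the strict ordering~(\ref{perturbedevals1}). The hard part is precisely this last conversion from Lemma~\ref{lemma:resolvent}'s \emph{resolvent} bound into an embedded \emph{eigenvalue}: it relies crucially on the separation $\sigma_\ess(\KsGe)\cap\sigma_\ess(\KsGo)=\{0\}$ furnished by Proposition~\ref{prop:evenodd} and Section~\ref{sec:essential}, without which a resolvent blow-up could be absorbed by essential spectrum rather than by a point eigenvalue. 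A secondary technical point is that $r$ and $\rho$ in Lemma~\ref{lemma:resolvent} depend only on $\lambda$ and $\epsilon$, not on the corner, so one $\Gamma$ can be made to handle all $m+n$ eigenvalues simultaneously.
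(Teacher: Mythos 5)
Your proof is correct and follows essentially the same route as the paper's: choose $\theta$ so that $b=\theta/\pi-1/2$ strictly dominates every unperturbed eigenvalue in modulus, pick a single $\epsilon_0$ small enough to isolate each eigenvalue in the appropriate half-interval, take minima of the $r(\lambda),\rho(\lambda)$ from Lemma~\ref{lemma:resolvent} over all target eigenvalues, build one $L$-symmetric type-$T$ perturbation via Lemma~\ref{lemma:existence}, convert resolvent blow-up into spectral proximity by self-adjointness, and then use the even/odd disjointness from Proposition~\ref{prop:evenodd} to upgrade ``near the spectrum'' to ``a genuine eigenvalue embedded in the opposite-parity essential spectrum.'' You also correctly flag the two load-bearing ingredients — the parity-splitting of $\sigma_\ess$ and the uniformity of $r,\rho$ across the finitely many targets — which are precisely what make the single-perturbation construction work.

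One small slip in justification: you argue that all $\lambda^e_j,\lambda^o_j$ lie strictly inside $(-\tfrac12,\tfrac12)$ because $\KsGz$ is compact and $\tfrac12$ is isolated, but compactness alone does not preclude $\lambda^o_n=\tfrac12$. The actual reason, which the paper states, is that the $\tfrac12$-eigenspace is one-dimensional and spanned by the density whose single-layer potential is constant in the interior, hence \emph{even}; therefore no odd eigenvalue can equal $\tfrac12$, and the negative $\lambda^e_j$ are trivially $<\tfrac12$. This is easily patched and does not affect the rest of the argument.
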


\begin{proof}
For part (a), observe that $-\lambda^e_m$ and $\lambda^o_n$ are less than $1/2$ because $\sigma(\KsG)$ is contained in the interval $(-1/2,1/2)$ except for the eigenvalue $1/2$.  The eigenfunction for $1/2$ is even because it corresponds to the single-layer potential that is constant on $\Gamma$.
Choose a real number $b$ such that $-b<\lambda^e_m<\lambda^o_n<b<1/2$, and let 
$\theta$ be the number such that $b=\theta/\pi - 1/2$, so that $\pi/2<\theta<\pi$.
Let $\epsilon>0$ be given such that
\begin{equation}
  \epsilon < \min\left\{ \textstyle\half |\lambda^e_i - \lambda^e_{i+1} |,\, \half |\lambda^o_j - \lambda^o_{j+1} |,\,
  |\lambda^e_1| ,\,  |\lambda^o_1|,\, b-\lambda^o_n,\, b+\lambda^e_m \right\},
  \quad
  i=1,\dots,m-1,\;\; j=1,\dots,n-1.
\end{equation}

Let $(U,\Delta_0,M)$ be a triple for $\Gamma_0$ guaranteed by Lemma~\ref{lemma:existence} for the given value of $\theta$.  
For this triple $(U,\Delta_0,M)$ and $\epsilon$, let $r(\lambda)$ and $\rho(\lambda)$ be the numbers stipulated in Lemma~\ref{lemma:resolvent} for
$\lambda\in\{ \lambda^e_1,\dots,\lambda^e_m, \lambda^o_1,\dots,\lambda^o_n \}$, and let $r$ be the minimum of $r(\lambda)$ and $\rho$ be the minimum of $\rho(\lambda)$ over all these eigenvalues.   Lemma~\ref{lemma:existence} provides a perturbation~$\Gamma$ of type \type such that (i) $\Gamma$ satisfies the Lipschitz Condition~\ref{cond:Lipschitz} subject to the triple $(U,\Delta_0,M)$, (ii) its corner has exterior angle $2\theta$, (iii) the conditions (\ref{condition1}) and~(\ref{condition2}) of Lemma~\ref{lemma:resolvent} are satisfied, (iv) $\Gamma$ is symmetric about $L$.  For this Lipschitz curve $\Gamma$, Lemma~\ref{lemma:resolvent} guarantees that
\begin{equation}
  \|(\KsG - \lambda )^{-1} \|_{\SG} > \epsilon^{-1}
  \qquad
  \forall\,
  \lambda\in\{ \lambda^e_1,\dots,\lambda^e_m, \lambda^o_1,\dots,\lambda^o_n \},
\end{equation}
in which $\KsG$ is considered as an operator in $\Hmhalfz(\Gamma)$.
As $\KsG$ is self-adjoint in $\Hmhalfz(\Gamma)$ with respect to the $\SG$ inner product, one obtains
\begin{equation}
  \dist( \lambda,\, \sigma(\KsG)) < \epsilon
  \qquad
  \forall\,
  \lambda\in\{ \lambda^e_1,\dots,\lambda^e_m, \lambda^o_1,\dots,\lambda^o_n \}.
\end{equation}
Because of part (2) of Lemma~\ref{lemma:resolvent}, this inequality holds for the spectrum of the even and odd components of~$\KsG$,
\begin{align}
  \dist( \lambda^e_j,\, \sigma(\KsGe)) < \epsilon & \quad\text{for }\, j=1,\dots,m, \label{s1} \\
  \dist( \lambda^o_j,\, \sigma(\KsGo)) < \epsilon & \quad\text{for }\, j=1,\dots,n.  \label{s2}
\end{align}
By Proposition~\ref{prop:evenodd} and the discussion preceding it, the essential spectra of these operators are
\begin{eqnarray}
  && \sigma_\ess(\KsGe) = [0,b], \label{s3}\\
  && \sigma_\ess(\KsGo) = [-b,0], \label{s4}
\end{eqnarray}
with $b=\frac{\theta}{\pi}-\frac{1}{2}$.
Because of (\ref{s1},\ref{s3}), the choice of $\epsilon$, and the self-adjointness of $\KsGe$, there exist eigenvalues $\tilde\lambda^e_j$ for $j=1,\dots,m$ that satisfy~(\ref{perturbedevals1}).  Similarly, because of (\ref{s2},\ref{s4}), there exist eigenvalues $\tilde\lambda^o_j$ for $j=1,\dots,n$ that satisfy~(\ref{perturbedevals1}).  Because of the choices of $b$ and $\epsilon$, one has
\begin{eqnarray}
  && \tilde\lambda^e_j \in \sigma_\ess(\KsGo), \\
  && \tilde\lambda^o_j \in \sigma_\ess(\KsGe).
\end{eqnarray}
Since $\pi/2<\theta<\pi$, the corner is outward-pointing.

Part (b) is proven analogously.  In this case, $b=-\theta/\pi + 1/2$, so that $0<\theta<\pi/2$, and the corner is therefore inward-pointing.
\end{proof}

For any reflectionally symmetric curve of class $C^{2,\alpha}$ except for a circle, Theorem~\ref{thm:main} allows one to create lots of eigenvalues in the essential spectrum by appropriate Lipschitz perturbations.

\begin{corollary}
  Let $\Gz$ be a simple closed curve of class $C^{2,\alpha}$ in $\RR^2$ that is symmetric about a line $L$ but that is not a circle.  For any positive integer $n$, there exists a perturbation $\Gamma$ of type~\type\!\!, also symmetric about $L$, such that $\KsG$ admits $n$ negative and $n$ positive eigenvalues that lie within the essential spectrum of $\KsG$.
\end{corollary}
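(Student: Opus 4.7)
The plan is to reduce the Corollary directly to Theorem~\ref{thm:main} by exhibiting, for any $n$, enough eigenvalues of $\KsGz$ of the required sign-parity combinations to feed either part~(a) or part~(b) of the theorem, with both indices set to $n$.

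The first step is to show that $\KsGz$ has infinitely many non-zero eigenvalues on $\Hmhalfz(\Gz)$. Since $\Gz$ is of class $C^{2,\alpha}$, $\KsGz$ is compact and self-adjoint in $\Hmhalfz(\Gz)$. A direct computation shows that on a circle of radius $R$ the Neumann-Poincar\'e kernel is the constant $1/(4\pi R)$, so $\KsGz$ is rank one with sole non-zero eigenvalue $\half$ on the constants, and it is a classical fact that the circle is the only planar curve making $\KsGz$ trivial on $\Hmhalfz(\Gz)$.  Moreover, Schiffer's symmetry of planar Fredholm eigenvalues pairs the non-zero eigenvalues as $\{+\lambda_k,-\lambda_k\}_{k\ge 1}$ with $\lambda_k\searrow 0$, so infinitely many eigenvalues of each sign are present.

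Second, using the reflection symmetry, decompose $\KsGz = \KsGze\oplus\KsGzo$ as in~(\ref{eq:Kdecomposition}). The Schiffer involution realizing the $\pm$-pairing interacts with parity in one of two ways: either it preserves parity, so each of $\KsGze,\KsGzo$ has $\pm$-symmetric spectrum, or it swaps parity, so positive eigenvalues of $\KsGze$ correspond to negative eigenvalues of $\KsGzo$ and vice versa.  Provided neither parity subspace is annihilated by $\KsGz$, one obtains for every $n$ at least one of the following configurations: \emph{(I)} $\KsGze$ admits $\ge n$ strictly negative eigenvalues and $\KsGzo$ admits $\ge n$ strictly positive eigenvalues, or \emph{(II)} $\KsGze$ admits $\ge n$ strictly positive eigenvalues and $\KsGzo$ admits $\ge n$ strictly negative eigenvalues.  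In configuration~(I), Theorem~\ref{thm:main}(a) applied with both indices equal to $n$ produces a type~\type perturbation $\Gamma$ with an outward-pointing corner, symmetric about $L$, such that $\KsG$ carries $n$ negative even and $n$ positive odd eigenvalues embedded in $\sigma_\ess(\KsG)$; in configuration~(II), Theorem~\ref{thm:main}(b) gives the analogous inward-cornered perturbation with $n$ negative odd and $n$ positive even embedded eigenvalues.  Either outcome yields $n$ negative and $n$ positive embedded eigenvalues of $\KsG$, as claimed.

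The main obstacle is the spectral-theoretic input underlying Step~2: verifying that at least one of the two favorable configurations actually holds. The delicate point is ruling out a degenerate scenario in which $\KsGz$ vanishes entirely on one parity subspace (or carries only eigenvalues of a single sign on it), which would block both parts of Theorem~\ref{thm:main}.  One natural route is to invoke Schiffer's $\pm$-symmetry refined to each parity subspace under the reflection, together with the non-circle hypothesis; an alternative is to translate vanishing of $\KsGz$ on a parity subspace, via the Plemelj symmetrization $\KG\SG=\SG\KsG$ and the unique-continuation properties of harmonic functions, into a structural identity that forces $\Gz$ to be a disk, contradicting the hypothesis.  Once the right configuration is identified, the rest of the argument is a direct citation of Theorem~\ref{thm:main}.
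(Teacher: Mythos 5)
Your proposal follows the same skeleton as the paper's proof---reduce to Theorem~\ref{thm:main} via the infinite-rank property of $\KsGz$ and an eigenvalue $\pm$-symmetry coupled to parity---but it leaves open the very fact that makes the case analysis close. The paper's Fact~(2) is definite: for each nonzero eigenvalue $\lambda$ with even (odd) eigenfunction, $-\lambda$ is an eigenvalue with \emph{odd} (\emph{even}) eigenfunction; this is drawn from \cite[Theorem~2.1]{HelsingKangLim2016}, augmented to track parity. Your step~2 instead hedges---``either it preserves parity\,\ldots\,or it swaps parity''---and the ``preserves parity'' branch you allow for does not occur; your own ``main obstacle'' paragraph is an acknowledgment that you have not settled which branch holds nor ruled out the degenerate scenario where one parity subspace carries only eigenvalues of a single sign. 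That gap is not peripheral: it is exactly the point where Fact~(2) is indispensable. Once you know the pairing \emph{always} swaps parity, the paper's argument is immediate: $\KsGz$ has infinite rank, hence both $\KsGze$ and $\KsGzo$ do (each eigenvalue of one pairs with a negated eigenvalue of the other); $\KsGzo$ then has infinitely many nonzero eigenvalues of one sign, and Fact~(2) transfers infinitely many of the opposite sign to $\KsGze$, satisfying the hypotheses of part~(a) or part~(b).

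A secondary imprecision: you justify infinitely many eigenvalues by citing that ``the circle is the only planar curve making $\KsGz$ trivial on $\Hmhalfz(\Gz)$,'' but non-triviality (nonzero operator) is strictly weaker than what is needed. The paper invokes the stronger statement that $\KsGz$ is of \emph{infinite rank} unless $\Gz$ is a circle \cite[\S7.3--7.4]{Shapiro1992}; a non-circle domain with, say, exactly three nonzero Fredholm eigenvalues would defeat your version of the claim while being ruled out by the paper's. You should cite the infinite-rank fact directly rather than route through the Schiffer $\pm$-pairing, which by itself does not produce infinitude.
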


\begin{proof}
We begin with two facts.
(1) Except for when $\Gz$ is a circle, the operator $\KsGz$ is always of infinite rank~\cite[\S7.3--7.4]{Shapiro1992}.
(2) For each nonzero eigenvalue $\lambda$ of $\KsGz$ corresponding to an even (odd) eigenfunction, $-\lambda$ is an eigenvalue of $\KsGz$ corresponding to an odd (even) eigenfunction.  The symmetry of the point spectrum is proved in~\cite[Theorem~2.1]{HelsingKangLim2016}; and the statement about the parities of the eigenfunctions can be obtained from augmenting the proof of that theorem, using the assumption that the eigenfunction corresponding to $\lambda$ is even (odd).

Assume that $\Gz$ is not a circle.
Facts (1) and (2) together imply that both $\KsGzo$ and $\KsGze$ are of infinite rank.  This means that $\KsGzo$ has infinitely many negative eigenvalues or infinitely many positive eigenvalues.  Suppose the former case holds.  Then by (2), $\KsGze$ has infinitely many positive eigenvalues.  Thus, for any integer $n$, the hypotheses of part (b) of Theorem~\ref{thm:main} are satisfied.  In the other case, the hypotheses of part (a) are satisfied.  In either case, the conclusion of the corollary follows from the theorem.
\end{proof}

\noindent{\bfseries Example: A perturbed ellipse.}\hspace{0.5em}
Consider the Neumann-Poincar\'e operator for an ellipse, whose eigenvalues and eigenfunctions are known explicitly~\cite[\S3]{ChungKangKimLee2014}.  They take simple forms in the elliptic coordinates $(\varrho, \omega)$, which are related to the the Cartesian coordinates $x = (x_1, x_2)$ by
\begin{equation}
x_1 = R \cos \omega \cosh \varrho, \quad x_2 = R \sin \omega \sinh \varrho, \quad \varrho > 0, \quad 0 \leq \omega \leq 2\pi. 
\end{equation}
The set $E = \{ (\varrho, \omega) : \varrho = \varrho_0 \}$ is an ellipse with foci $( \pm R, 0)$. The non-one-half eigenvalues of the operator $\mathcal K_E^*$ are $\alpha_n$ and $-\alpha_n$ and the corresponding eigenfunctions are
\begin{equation}
  \phi_n^+ := \Xi(\varrho_0, \omega)^{-1}\cos n\omega,
  \qquad
  \phi_n^- := \Xi(\varrho_0, \omega)^{-1}\sin n\omega
     \qquad (n \geq 1),
\end{equation}
in which
\begin{equation}\label{alpha}
\quad \alpha_n = \frac{1}{2e^{2n\varrho_0}},
\qquad
\Xi (\varrho_0, \omega)
     = R \sqrt{\sinh^2 \varrho_0 + \sin^2 \omega\,}
     \qquad (n \geq 1).
\end{equation}

We make two observations. First, $\phi_n^{\pm}$ are in $L^2(E)$, as guaranteed by Lemma~\ref{lemma:L2}.
Second, $\phi_n^+$ are even about the major axis of the ellipse, $\phi_n^-$ are odd about the major axis, $\phi_{2k+1}^+$ and $\phi_{2k}^-$ are odd about the minor axis, and $\phi_{2k}^+$ and $\phi_{2k+1}^-$ are even about the minor axis. That is to say, all eigenfunctions corresponding to positive (negative) eigenvalues are even (odd) with respect to the major axis, and they alternate between odd and even with respect to the minor axis.

Let $L$ be the major axis of an ellipse $\Gz=E$.
The hypotheses of part (b) of Theorem~\ref{thm:main} are satisfied for any integers $m$ and $n$, and therefore one can perturb $\Gz$ to a domain $\Gamma$ by attaching an inward-pointing corner with its tip on $L$ (according to Definition~\ref{def:Tperturbation}) that is small enough so that $\KsG$ has eigenvalues within the essential spectrum as described in the conclusion of part~(b).
Now let $L$ be the minor axis of an ellipse $\Gz=E$.  Either of the hypotheses of parts (a) and (b) of the theorem can be satisfied for any $m$ and $n$, and thereby eigenvalues within the essential spectrum can be created for $\KsG$ according to the theorem.

\section{Discussion}\label{sec:discussion}

We end this article with some questions and observations.

\smallskip
{\bfseries 1.} Can $\KsG$ have infinitely many embedded eigenvalues, and might this actually occur typically?
Our proof guarantees only a finite number of eigenvalues within the essential spectrum for a given Lipschitz type \type perturbation $\Gamma$ of $\Gz$ because it establishes merely that the perturbation of an eigenvalue tends to zero as the size of the attached corner tends to zero.  One requires tighter control over the variation of the eigenvalues in order to guarantee that an infinite sequence of eigenvalues tending to zero is retained, with the same sign, when passing from $\Gz$ to $\Gamma$.  

A desirable result would be to prove that, for a symmetric curve $\Gamma$ with an outward-pointing corner, the positive part of $\KsGo$ is compact and has infinite rank.  This may not be unreasonable, seeing that $\KsGo$ has non-positive essential spectrum.  Such a result would guarantee an infinite sequence of positive eigenvalues of $\KsGo$ which would overlap with the essential spectrum of $\KsGe$. 

\smallskip
{\bfseries 2.} What happens when the essential spectrum of $\KsGe$ overlaps eigenvalues of $\KsGze$? 
We expect that such eigenvalues of $\KsGze$ would not be perturbed to eigenvalues of $\KsGe$ but rather would do the generic thing and become resonances, which are poles of the analytic continuation of the resolvent of $\KsGe$ onto another Riemann sheet.  This type of resonance is demonstrated numerically in~\cite[Fig.~6]{HelsingKangLim2016},  where one observes resonances around the spectral values $\pm0.08$; this example is discussed in more detail in point~5 below.

\smallskip
{\bfseries 3.} Can one construct embedded eigenvalues of the Neumann-Poincar\'e operator in the absence of reflectional symmetry?

\smallskip
{\bfseries 4.} The technique of perturbing a reflectionally symmetric $C^{2,\alpha}$ curve by attaching corners to create embedded eigenvalues is not extensible to a curve that admits a different group of symmetries, at least not in a straightforward manner.  Consider a curve $\Gamma$ with a finite cyclic rotational symmetry group $C_r$ of order~$r$.  The Neumann-Poincar\'e operator is decomposed on the $r$ orthogonal eigenspaces of the action of $C_r$ on $\Hmhalf(\Gamma)$, that is, the Hilbert-space decomposition
\begin{equation}\label{Cr1}
  H^{-1/2}(\Gamma) = H^{-1/2,0}(\Gamma) \oplus \cdots \oplus H^{-1/2,r-1}(\Gamma)
\end{equation}
into eigenspaces of $C_r$ induces a decomposition
\begin{equation}\label{Cr2}
   \KsG = {\mathcal K_{\Gamma,0}^*} \oplus \cdots \oplus {\mathcal K_{\Gamma,r-1}^*}\,.
\end{equation}
If $\Gamma$ has exactly $r$ small corners that are cyclically permuted under $C_r$, the essential spectrum of each of these component operators is a symmetric interval $[-b,b]$.  This is in contrast to the case of reflectional symmetry, as was seen earlier, where $\sigma_\ess(\KsGo)=[-b,0]$ and $\sigma_\ess(\KsGe)=[0,b]$ (for an outward-pointing corner); and in contrast to the rotationally invariant surface with a conical point in $\RR^3$ investigated by Helsing and Perfekt~\cite[Theorem~3.8,\,Fig.~5]{HelsingPerfekt2017}, in which different Fourier components of the Neumann-Poincar\'e operator have different essential spectrum.

\smallskip
{\bfseries 5.} 
What if a corner is attached to a smooth curve without smoothing out the points of attachment?  The additional corners at the attachment points will contribute to the essential spectrum of the Neumann-Poincar\'e operator of the perturbed domain.  A nice example of this is provided by a numerical computation of Helsing, Kang and Lim in~\cite[Fig.~6]{HelsingKangLim2016}.  There, the $C^{2,\alpha}$ curve is an ellipse $\Gz$, to which an outward corner is attached symmetrically with respect to the minor axis $L$ of symmetry of the ellipse to create a perturbed Lipschitz curve $\Gamma$, illustrated in Fig.~\ref{fig:ellipse}.  Two additional inward corners not lying on $L$ are created by this attachment, and these two corners are positioned symmetrically about $L$.  The computation in~\cite{HelsingKangLim2016} demonstrates exactly one positive embedded eigenvalue with odd eigenfunction and exactly one negative embedded eigenvalue with even eigenfunction.  In fact, this is expected based on the eigenvalues of $\KsGz$ and the essential spectrum of~$\KsG$.

Specifically, we will show that (i)~the essential spectrum of the even and odd components of $\KsG$, created by the three corners, are
\begin{equation}\label{ellipseessential}
  \renewcommand{\arraystretch}{1.1}
\left.
\begin{array}{lll}
  \sigma_\ess(\KsGo) &=& \textstyle[-\frac{1}{4},\frac{1}{8}-\eta], \\
  \sigma_\ess(\KsGe) &=& \textstyle[-\frac{1}{8}+\eta,\frac{1}{4}],    
\end{array}
\right.
\end{equation}
in which $\eta$ is a tiny number with $0<\eta<1/8$, (ii)~the largest four eigenvalues (see~\ref{alpha}) of $\KsGz$ are equal to $\pm\alpha_1=\pm 1/5$ and $\pm\alpha_2=\pm2/25$, and (iii)~the eigenfunction for eigenvalue $1/5$ is odd, and that for $-1/5$ is even.  Theorem~\ref{thm:main} and the supporting lemmas can be modified to handle this example, in which the perturbed part of the curve has more than one corner.  By making the corner attachment small enough so that $\KsGo$ has a (nonembedded) eigenvalue sufficiently near $1/5$ and $\KsGe$ has a (nonembedded) eigenvalue sufficiently near $-1/5$, these eigenvalues of~$\KsG$ are contained within the essential spectrum of~$\KsG$ in view of~(\ref{ellipseessential}).  And the corner attachment can be made small enough such that $\alpha_2=2/25<1/8-\eta$, so that the next eigenvalues in the sequence lie within the essential spectra of both $\sigma_\ess(\KsGo)$ and $\sigma_\ess(\KsGe)$ and thus are not expected to be perturbed to eigenvalues of $\KsG$.

Items (ii) and (iii) are results of the discussion on ellipses at the end of section~\ref{sec:embedded}, using $\varrho_0=\tanh^{-1}(3/7)$.
Item~(i) can be proved as follows.  Modify the proof of Proposition~\ref{prop:evenodd} by letting the cutoff function $\rho_1$ be localized about the one outward corner lying on~$L$ and letting $\rho_2=\rho_2^+ + \rho_2^-$ be a sum of two cutoff functions, one localized about each of the two inward corners not lying on~$L$.  As before, one has
\begin{equation}
  \KGe \;=\; \sum\limits_{0\leq i,j \leq n} M_{\rho_i} \KGe M_{\rho_j},
\end{equation}
with $\rho_0+\rho_1+\rho_2=1$, and the essential spectrum is
\begin{equation}
\sigma_\ess(\KGe) \;=\; \sigma_{\text{ea}} \left( M_{\rho_1} \KGe M_{\rho_1} \right) 
                   \cup \sigma_{\text{ea}} \left( M_{\rho_2} \KGe M_{\rho_2} \right).
\end{equation}
The half exterior angle of the outward corner is $\theta_1=3\pi/4$, and thus
$\sigma_{\text{ea}} \left( M_{\rho_1} \KGe M_{\rho_1} \right)$ is equal to the positive interval $[0,1/4]$ since this operator acts on functions that are even with respect to~$L$.  The operator $M_{\rho_2} \KGe M_{\rho_2}$ also acts on functions that are even with respect to~$L$, but since the inward corners do not lie on~$L$, the symmetry of a function about $L$ does not restrict the function near either of the inward corners.
Thus the contribution to the essential spectrum coming from the inward corners is the full interval $[-b,b]$, with $b=1/8-\eta>0$ since the half exterior angle is a little bigger than~$3\pi/8$; that is to say,
\begin{equation}
  \sigma_{\text{ea}} \left( M_{\rho_2} \KGe M_{\rho_2} \right)
  =
  \sigma_{\text{ea}} \left( M_{\rho_2^+} \KG M_{\rho_2^+} \right)
  =
  [-\textstyle\frac{1}{8}+\eta,\textstyle\frac{1}{8}-\eta].
\end{equation}
Likewise, $\sigma_{\text{ea}} \left( M_{\rho_2} \KGo M_{\rho_2} \right)=[-\textstyle\frac{1}{8}+\eta,\textstyle\frac{1}{8}-\eta]$.

To make rigorous the assumption above that the eigenvalues $\pm\alpha_1$ of $\KsGz$ are perturbed into eigenvalues of $\KsG$, notice that Lemma~\ref{lemma:resolvent} does not rely on the smoothness of the attachment of the replacement curve $D$ to $\Gz$, so the resolvent bound established by that Lemma holds for this example.  In view of the essential spectra~(\ref{ellipseessential}) of the even and odd components, one can establish the existence of the perturbed eigenvalues in a manner following the proof of Theorem~\ref{thm:main}.

\begin{figure}  
  \centerline{\scalebox{0.33}{\includegraphics{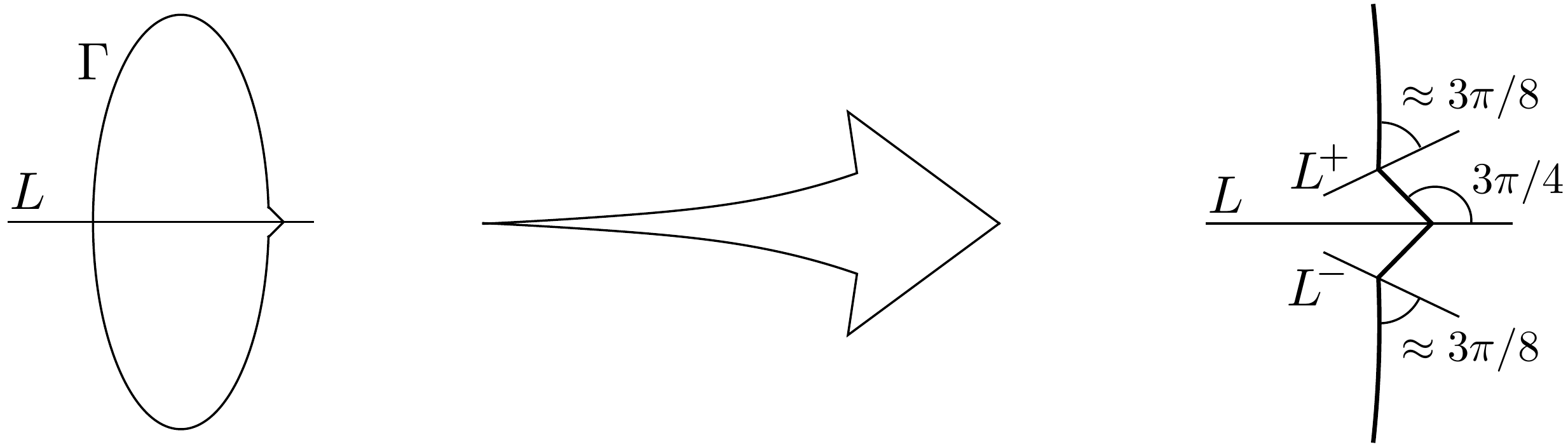}}}
\caption{\small This is the Lipschitz perturbation $\Gamma$ of an ellipse treated numerically in~\cite[Fig.~6]{HelsingKangLim2016}. An outward-pointing corner replaces a small section of the ellipse centered around its minor axis $L$.  The points at which the corner attaches to the ellipse introduce two inward-pointing corners.  The lines $L^{\!-}$ and $L^{\!+}$ bisect these two corners.}
\label{fig:ellipse}
\end{figure}

\bigskip
\noindent{\bfseries Acknowledgement.}
This material is based upon work supported by the National Science Foundation under Grant No. DMS-1814902.

\end{document}